\newtheorem{theorem}{Theorem}[section]
\newtheorem{lemma}[theorem]{Lemma}
\newtheorem*{lemma*}{Lemma}
\newtheorem*{definition*}{Definition}
\newtheorem{definition}[theorem]{Definition}
\newtheorem{proposition}[theorem]{Proposition}
\newtheorem{corollary}[theorem]{Corollary}
\theoremstyle{definition}
\theoremstyle{remark}
\newtheorem{remark}{Remark}[section]
\numberwithin{equation}{section}
\newcommand{\leqn}{\begin{equation}\label}
\def\endeqn{\end{equation}}
      \def\RR{\mathbb{R}}
\def\dim{\mathrm{dim}}
\newcommand{\res}{\hbox{{\vrule height .22cm}{\leaders\hrule\hskip .2cm}}}
\renewcommand{\@makefnmark}{\mbox{\textsuperscript{}}}
\def\adots{\mathinner{\mkern2mu\raise0pt\hbox{.}  
\mkern2mu\raise4pt\hbox{.}\mkern1mu
\raise7pt\vbox{\kern7pt\hbox{.}}\mkern1mu}}
\def\res{\hbox{ {\vrule height .22cm}{\leaders\hrule\hskip.2cm} } }
\begin{document}

\title{Singular Sets of Uniformly Asymptotically Doubling Measures}
\author{ A. Dali Nimer}
\date{}
\maketitle
\footnote{Department of Mathematics, University of Chicago, 5734, S. University Ave., Chicago, IL, 60637
E-mail address: nimer@uchicago.edu}
\footnote{Mathematics Subject Classification Primary 28A33, 49Q15}

\begin{abstract}
In the following paper, we prove a dimension bound on the singular set of a Radon measure assuming its doubling ratio converges uniformly on compact sets. More precisely, we prove that if a Radon measure is $n$-Uniformly Asymptotically Doubling, then $\dim(\mathcal{S}_{\mu}) \leq n-3$, where $\mathcal{S}_{\mu}$ is the singular set of the measure.
\end{abstract}

\section{Introduction}
In $\cite{KT}$, the authors investigate a free boundary regularity problem for harmonic measures: they study how the doubling properties of the harmonic measure of a domain determine the geometry of its boundary. Indeed, they show that under the appropriate hypotheses, if the harmonic measure of a domain is asymptotically optimally doubling, then its boundary is locally well approximated by planes.

A Radon measure $\mu$ is said to be $n$-asymptotically optimally doubling (denoted by $n$-AOD) if for small radii, it doubles like Lebesgue measure. More precisely this means that the doubling ratio given by $\frac{\mu(B_{tr}(x))}{\mu(B_r(x))}$ behave like $t^n$ for small radii $r$.

\begin{definition}
Consider a Radon measure $\mu$ on $\RR^{d}$, $\Sigma=supp(\mu)$. For a fixed integer $n$, $n \leq d$, define for $x \in \Sigma$, $r>0$ and $t \in (0,1]$ 

\begin{equation}\label{doublingratio}
R_{t}(x,r)= \frac{\mu(B_{tr}(x))}{\mu(B_r(x))}-t^n
\end{equation}
which encodes the doubling properties of $\mu$. We say $\mu$ is $n$-asymptotically optimally doubling ($n$-AOD) if for each compact set $K \subset \RR^{d}$, $x \in K$ and $t \in [\frac{1}{2},1]$, we have
\begin{equation} \label{AOD}
\lim_{r \to 0^{+}} \sup_{x \in K} |R_{t}(x,r)|=0
\end{equation}
\end{definition}

In $\cite{KT}$, the following theorem is proven.

\begin{theorem} \cite{KT} \label{mainKT}
Let $\mu$ be an $n$-asymptotically doubling measure. Then its support $\Sigma \subset \RR^{n+1}$ is Reifenberg flat with vanishing constant if $n=1,2$ and if $n \geq 3$ there exists $\delta$  such that if $\Sigma$ is $\delta$ Reifenberg flat, then $\Sigma$ is Reifenberg flat with vanishing constant.
\end{theorem}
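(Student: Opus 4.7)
The plan is to argue by blow-up and compactness. For $x \in \Sigma$ and $r > 0$, let $T_{x,r}\mu$ denote the rescaling $T_{x,r}\mu(E) = \mu(x+rE)/\mu(B_r(x))$. The $n$-AOD condition~(\ref{AOD}) forces the doubling ratios $R_t(x,r)$ to vanish uniformly on compacts as $r \to 0^+$. My first step will be to extract, along any sequence $r_j \to 0$, a weak-$*$ limit $\nu = \lim T_{x,r_j}\mu$ (a tangent measure at $x$), and to verify that the exact power law
\[
\nu(B_{ts}(y)) = t^n\, \nu(B_s(y)) \qquad \text{for every } y \in \supp \nu,\ s > 0,\ t \in (0,1]
\]
passes to the limit. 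The dyadic case $t \in [1/2,1]$ given by the hypothesis, together with iteration, extends this to all $t \in (0,1]$, and the uniform AOD property gives the mass bounds on compacts needed for the weak-$*$ compactness.

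The core step is the classification of exact $n$-uniform measures in $\RR^{n+1}$. A Preiss--Kowalski type argument combined with $n$-uniformity in codimension one shows $\supp \nu$ is an $n$-dimensional cone through the origin with strong tangential regularity. For $n = 1,2$ one checks (along the lines of Kowalski--Preiss) that every such cone must be an affine $n$-plane, yielding flat tangent measures at every $x \in \Sigma$. For $n \ge 3$ the Kowalski--Preiss cone $\{x_4^2 = x_1^2+x_2^2+x_3^2\} \subset \RR^4$ and its higher-dimensional analogues show that non-planar exact $n$-uniform measures exist; here the $\delta$-Reifenberg flat hypothesis enters, forcing $\supp \nu$ to lie $\delta$-close to a hyperplane in $B_1$. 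Choosing $\delta$ smaller than the distance (in the natural topology on $n$-uniform cones) from the set of non-planar exact $n$-uniform cones to the set of $n$-planes then rules out all non-flat candidates.

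Once every tangent measure at every $x \in \Sigma$ is flat, vanishing Reifenberg flatness of $\Sigma$ follows by a standard compactness/contradiction argument: if some sequence of scales $r_j \to 0$ at points $x_j \in \Sigma \cap K$ produced Reifenberg excess $\ge \eta > 0$ relative to every affine $n$-plane in $B_1$, the rescalings $r_j^{-1}(\Sigma - x_j)$ would Hausdorff-converge along a subsequence to the support of a tangent measure, which has just been shown to be an $n$-plane, a contradiction. In the $n \ge 3$ case one must additionally verify that the $\delta$-flatness hypothesis propagates to all nearby points and smaller scales, which is a consequence of the uniformity in $x$ in the AOD condition combined with the quantitative openness of the planar regime.

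The main obstacle will be the classification of exact $n$-uniform measures in $\RR^{n+1}$: invoking the Preiss and Kowalski--Preiss structural theory in a form strong enough to separate planar tangent measures from Kowalski--Preiss-type cones for $n \ge 3$, and making this separation quantitative so that a single smallness constant $\delta$ suffices to trap all blowups in the planar regime rather than allowing them to drift toward a non-flat cone.
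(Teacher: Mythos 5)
This theorem is not proved in the paper at all --- it is quoted from \cite{KT} --- so there is no in-paper argument to compare against; judged on its own terms, your outline follows essentially the strategy of Kenig--Toro: blow up, show the limit measures are $n$-uniform, invoke the classification of $n$-uniform measures in codimension one, use the $\delta$-Reifenberg flatness hypothesis to exclude the light-cone alternative when $n \geq 3$, and get the vanishing constant by a compactness argument. Two points need tightening. First, your intermediate claim that the support of a blow-up is ``an $n$-dimensional cone through the origin with strong tangential regularity'' is neither justified nor needed: a tangent of an AOD measure is $n$-uniform but not a priori conical, and the correct move is to invoke the Kowalski--Preiss theorem \cite{KoP} directly, which says that an $n$-uniform measure in $\RR^{n+1}$ is either flat or, up to isometry, $\mathcal{H}^{n}$ restricted to $\mathcal{C} \times \RR^{n-3}$; likewise for $n=1,2$ flatness is a cited structural fact (\cite{P}, \cite{KoP}), not something to be re-derived inside this proof. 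Granting that input, what you flag as the main obstacle --- making the separation from planes quantitative for a single $\delta$ --- is immediate, since near its vertex the cone $\mathcal{C}\times\RR^{n-3}$ lies at a fixed positive Hausdorff distance from every $n$-plane in $B_1$, and $\delta$-flatness of $\Sigma$ at all points and small scales passes to blow-up limits. Second, to obtain the \emph{vanishing} constant uniformly on compact sets, your final contradiction argument rescales at moving base points $x_j$, so the limit is a pseudo-tangent rather than a tangent at a fixed point; you must show such limits are still $n$-uniform --- this is precisely where the supremum over $x \in K$ in \eqref{AOD} and the doubling hypothesis are used, cf.\ Theorem \ref{pseudounif} --- and you must also justify that weak convergence of the rescaled measures gives local Hausdorff convergence of their supports, which holds because the rescalings are uniformly doubling. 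With these two steps made explicit, your outline is a correct rendering of the proof in \cite{KT}.
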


The notion of Reifenberg flatness will not be defined precisely in this paper, but for a set to be Reifenberg flat means that it is locally well approximated by planes.

In $\cite{DKT}$ and $\cite{PTT}$, the authors show that if one assumes that the rate at which the doubling ratio of a measure approaches $t^n$ is Holder, then we obtain even more information on the regularity of its support. 

\begin{theorem} [\cite{PTT}, \cite{DKT}] \label{PTT2}
 
For each $\alpha >0$, there exists $\beta=\beta(\alpha)$ with the following property. If $\mu$ is a positive Radon measure supported on $\Sigma \subset \RR^d$ whose density ratio $\frac{\mu(B_r(x))}{r^n}$ approaches $1$ in a Holder way for small $r$, then:
\begin{itemize}
\item(1.10, $\cite{PTT}$) if $n=1,2$, $\Sigma$ is a $C^{1,\beta}$ submanifold of dimension $n$ in $\RR^{d}$.
\item(1.10, $\cite{PTT}$) if $n \geq 3$, $\Sigma$ is a $C^{1,\beta}$ submanifold of dimension $n$ in $\RR^{d}$ away from a closed set $\mathcal{S}$ such that $\mathcal{H}^{n}(\mathcal{S})=0$, where  $\mathcal{S}=\Sigma \backslash \mathcal{R}$ and $\mathcal{R}= \left\lbrace x \in \Sigma ; \limsup_{r \to 0} \theta(x,r) = 0 \right\rbrace$.
\end{itemize}
 \end{theorem}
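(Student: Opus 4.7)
The plan is to pass from Hölder density control to Hölder control of flatness, and then invoke a Reifenberg-type regularity theorem. First, I would observe that the hypothesis $\bigl|\mu(B_r(x))/r^n - 1\bigr| \le C r^{\alpha}$ implies, after rescaling by $r_j \to 0$, that every tangent measure $T$ to $\mu$ at every point $x \in \Sigma$ is $n$-uniform, that is, $T(B_\rho(y)) = \omega_n \rho^n$ for every $y \in \mathrm{supp}(T)$ and every $\rho > 0$. Since Hölder convergence of the density ratio in particular implies the $n$-AOD condition, Theorem \ref{mainKT} already guarantees that $\Sigma$ is Reifenberg flat with vanishing constant (modulo the initial $\delta$-flatness assumption when $n \ge 3$, which itself follows from Hölder smallness).

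For $n = 1, 2$, Preiss's classification of $n$-uniform measures in $\mathbb{R}^d$ states that every such measure is flat, i.e., a constant multiple of $n$-dimensional Hausdorff measure on an affine $n$-plane, so every tangent measure to $\mu$ is flat everywhere. For $n \ge 3$, non-flat $n$-uniform measures exist (the Kowalski--Preiss cone being the prototype), so a priori tangent measures could fail to be flat; however, this failure occurs only on the closed set $\mathcal{S} = \Sigma \setminus \mathcal{R}$, and $\mathcal{H}^n(\mathcal{S}) = 0$ follows by a differentiation argument tied to Preiss's rectifiability theorem.

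Next I would promote qualitative flatness on $\mathcal{R}$ to a quantitative Hölder rate. Define a bilateral flatness number $\beta_\mu(x,r)$ as the infimum over affine $n$-planes $L$ through $x$ of $r^{-1}$ times the Hausdorff distance between $\Sigma \cap B_r(x)$ and $L \cap B_r(x)$. Using the Hölder comparison between $\mu$ and the best flat approximation, together with a one-step excess decay lemma of the form $\beta_\mu(x, \lambda r) \le \tfrac{1}{2}\beta_\mu(x,r) + C r^\alpha$ for some fixed $\lambda \in (0,1)$, a Campanato-type iteration yields $\beta_\mu(x,r) \le C r^\beta$ for some $\beta = \beta(\alpha) > 0$. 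Feeding this into the David--Kenig--Toro $C^{1,\beta}$ Reifenberg parametrization theorem then produces a local $C^{1,\beta}$ graph structure of $\Sigma$ (restricted to $\mathcal{R}$ when $n \ge 3$) over the tangent plane.

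The hardest step is the one-step flatness decay lemma. Uniform smallness of the density error does not by itself control the rotation of the best approximating plane between consecutive dyadic scales. One must exploit both the Hölder gain in the error and the rigidity of flat $n$-uniform measures (via, e.g., a second-variation or eigenvalue-gap argument around the flat cone) to show that the best planes at scales $r$ and $\lambda r$ differ in angle by a genuinely contractive amount; this is what makes the iteration summable and produces the Hölder rate $\beta$ rather than a mere modulus of continuity.
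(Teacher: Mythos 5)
First, note that the paper does not prove this statement at all: it is quoted verbatim from \cite{PTT} and \cite{DKT} as background, so the only fair comparison is with the proofs in those papers, whose overall architecture (tangent measures are $n$-uniform, Preiss's classification for $n=1,2$, quantitative flatness decay, Reifenberg-type parametrization with H\"older estimates) your sketch does mirror at the level of headings.

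There are, however, two genuine gaps. The first is an outright error in your reduction for $n\ge 3$: you claim that the initial $\delta$-Reifenberg flatness needed in Theorem \ref{mainKT} ``itself follows from H\"older smallness.'' It does not. The measure $\mathcal{H}^n$ restricted to the Kowalski--Preiss cone $\mathcal{C}\times\RR^{n-3}$ (see \cite{KoP}) has density ratio \emph{identically} equal to $1$ at every point and every scale, so it satisfies your H\"older hypothesis with any exponent, yet its support is not $\delta$-Reifenberg flat near the vertex for small $\delta$, and the vertex is a genuine singular point. If your claim were correct the set $\mathcal{S}$ in the statement would always be empty, which is false; the whole point of the second bullet is that flatness can only be propagated at points of $\mathcal{R}$, where $\theta(x,r)$ is small at some scale, using Preiss's $\epsilon_0$-gap between flat and non-flat uniform measures to push smallness to all finer scales. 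The second gap is that the one-step excess/flatness decay lemma, which you yourself identify as the hardest step, is simply asserted (``second-variation or eigenvalue-gap argument around the flat cone''). That lemma, together with the accompanying control on the tilting of the optimal planes between dyadic scales, is precisely the analytic core of \cite{PTT} and \cite{DKT}; without proving it, the Campanato iteration and the resulting exponent $\beta=\beta(\alpha)$ have no foundation, so the proposal as written is a plausible roadmap but not a proof.
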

 
One of the main insights behind those results is the fact that the doubling ratio of $\mu$ behaving like a power of $t$ implies that the tangent objects to $\mu$ are $n$-uniform. We say that a measure $\nu$ is $n$-uniform if there exists a constant $c >0$ such that for every $x$ in the support of $\nu$ and every $r>0$, $\nu(B_r(x))=cr^{n}$. To make this statement more precise, let us state the notion of pseudo-tangents to a measure which were introduced in $\cite{KT}$.

\begin{definition} 
Let $\mu$ be a doubling Radon measure in $\RR^{d}$. We say that $\nu$ is a pseudo-tangent measure of $\mu$ at the point $x \in \mbox{supp} \mu$ if $\nu$ is a nonzero Radon measure in $\RR^{d}$ and if there exists a sequence of points $x_i \in \mbox{supp} \mu$  such that $x_i \to x$ and  sequences of positive numbers $\left\lbrace r_i \right\rbrace$ and $\left\lbrace c_i \right\rbrace$ such that $r_i \downarrow 0$ and $c_i T_{{x_i},{r_i}} \sharp \mu \rightharpoonup \nu$.
\end{definition} 

\begin{theorem}[\cite{KT}]\label{pseudounif} Let $\mu$ be a Radon measure in $\mathbb{R}^{d}$ that is doubling and $n$-asymptotically optimally doubling. Then all pseudo-tangent measures of $\mu$ are $n$-uniform. 
\end{theorem}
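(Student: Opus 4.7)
The plan is to show that for any $y \in \supp(\nu)$ and all $s,t>0$, one has $\nu(B_{ts}(y)) = t^n\, \nu(B_s(y))$, which (together with $\nu$ being a nonzero Radon measure) yields $\nu(B_r(y)) = c_y\, r^n$ for all $r>0$; a standard large-radius comparison then forces $c_y$ to be independent of $y$.

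The first step is to upgrade the $n$-AOD hypothesis to a uniform statement valid for all $t>0$, not just $t\in[1/2,1]$: for any compact $K\subset\RR^d$ and any $T>1$,
\begin{equation*}
\lim_{r\to 0^+}\; \sup_{x\in K\cap\supp(\mu),\; t\in[1/T,T]}\; \left|\frac{\mu(B_{tr}(x))}{\mu(B_r(x))} - t^n\right| = 0.
\end{equation*}
This is obtained, for $t\in(0,1]$, by writing $t = 2^{-k} s$ with $s\in(1/2,1]$ and telescoping $k+1$ ratios (the number of factors is bounded by $\log_2 T + 1$), and for $t>1$ by inverting the role of $r$ and $tr$.

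Now fix $y\in\supp(\nu)$. Since $\mu_i:=c_i T_{x_i,r_i}\sharp\mu$ converges weakly to $\nu$, there exists a sequence $y_i\in\supp(\mu_i)$ with $y_i\to y$; unwinding the dilation, $z_i := x_i + r_i y_i$ lies in $\supp(\mu)$ and $z_i\to x$ because $r_iy_i\to 0$. Fix a reference scale $\rho>0$ with $\nu(\partial B_\rho(y))=0$ and $\nu(B_\rho(y))>0$. For any $s>0$ outside the countable set where $\nu(\partial B_s(y))>0$, weak convergence and the cancellation of the $c_i$ factors give
\begin{equation*}
\frac{\nu(B_s(y))}{\nu(B_\rho(y))} = \lim_{i\to\infty} \frac{\mu(B_{sr_i}(x_i + r_i y))}{\mu(B_{\rho r_i}(x_i + r_i y))}.
\end{equation*}
Setting $\epsilon_i := |y_i - y|\to 0$, the inclusions $B_{(s-\epsilon_i)r_i}(z_i)\subset B_{sr_i}(x_i+r_iy)\subset B_{(s+\epsilon_i)r_i}(z_i)$ (and the analogous ones for $\rho$) sandwich this ratio between $\mu(B_{(s\pm\epsilon_i)r_i}(z_i))/\mu(B_{(\rho\mp\epsilon_i)r_i}(z_i))$, and the extended uniform AOD applied at $z_i$ with base radius $r_i\to 0$ shows both bounds tend to $(s/\rho)^n$.

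Thus $\nu(B_s(y)) = (s/\rho)^n\,\nu(B_\rho(y))$ on a dense set of $s$, and right-continuity of $s\mapsto\nu(B_s(y))$ extends this to every $s>0$: $\nu(B_r(y)) = c_y r^n$ with $c_y = \nu(B_\rho(y))/\rho^n$. For any $y'\in\supp(\nu)$ with $d=|y-y'|$, the inclusions $B_{r-d}(y')\subset B_r(y)\subset B_{r+d}(y')$ give $c_{y'}(r-d)^n\leq c_y r^n\leq c_{y'}(r+d)^n$, so $c_y=c_{y'}$ upon letting $r\to\infty$. The main technical obstacle is the combined center-swap and radius-perturbation in the sandwich step: absorbing the perturbation $(s\pm\epsilon_i)r_i$ requires the \emph{extended} AOD uniformly in $t$ near $s$, and absorbing the center $z_i$ requires uniformity over the compact set $\{z_i\}\cup\{x\}$. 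Once that uniform convergence is in place, the rest is routine weak-convergence bookkeeping.
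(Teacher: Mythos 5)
Your proposal is correct, and it reaches the conclusion by a somewhat different route than the paper. The paper (which proves the general UAD version, Theorem \ref{pseudotangentUAD}, and cites \cite{KT} for this statement) works with the normalized blow-ups $\mu(B_{r_i}(\xi_i))^{-1}T_{\xi_i,r_i}[\mu]$ and estimates $\mu_{\xi_i,r_i}(B_r(x))$ directly: it transfers the ball to a nearby support point $z_i$, then compares measures of balls centered at the two \emph{different} points $z_i$ and $\xi_i$ via an enlargement trick (the $\kappa$ in \eqref{ineq2}), and finally uses the open/closed-ball semicontinuity inequalities for weak convergence, obtaining $\nu(B_r(x))=f_\mu(\xi,r)$ at once for every $x$, so no further argument about the constant is needed. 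You instead exploit the scale invariance of the statement: by taking the ratio $\nu(B_s(y))/\nu(B_\rho(y))$ you cancel the unknown constants $c_i$ (so you never need the doubling-based renormalization remark) and you only ever compare two balls with the \emph{same} center $z_i$, which removes the cross-center estimate entirely; the price is (i) you must upgrade the AOD hypothesis, stated for each fixed $t\in[1/2,1]$, to a statement uniform in $t$ over compact subsets of $(0,\infty)$ -- your telescoping handles the dyadic part, but you should also say explicitly that uniformity in the remaining factor $s\in(1/2,1]$ follows from monotonicity of $t\mapsto\mu(B_{tr}(x))$ together with continuity of $t^n$ (a finite-grid/Dini argument), since the definition gives only pointwise-in-$t$ convergence -- and (ii) you need the closing step letting $r\to\infty$ to see that $c_y$ is independent of $y$, which the paper's normalization makes automatic. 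Two harmless slips: for open balls the map $s\mapsto\nu(B_s(y))$ is continuous from below (left-continuous), which is what your dense-set extension actually uses, not right-continuity; and the existence of $y_i\in\supp(\mu_i)$ with $y_i\to y$ deserves the one-line contradiction argument (it is the second half of the paper's Lemma 3.2). With those points made explicit, your argument is complete and arguably more economical for the AOD case, while the paper's version is built to give the sharper conclusion $\nu(B_r(x))=f_\mu(\xi,r)$ in the general UAD setting.
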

This theorem says that a large class of measures is described asymptotically by $n$-uniform measures. Therefore, understanding the geometry of the support of $n$-uniform measures is important to describe the support of measures with ``good'' doubling ratios.

A measure being $n$-uniform in $\mathbb{R}^{d}$ implies a great deal of rigidity on its geometry. Indeed, if $n=1,2$, then it has to be flat (see $\cite{P}$). If $n=d-1$, then the measure is either flat or supported on the set $\mathcal{C} \times \mathbb{R}^{n-3}$ where $\mathcal{C}$ is the cone given by $\mathcal{C}=\left\lbrace (x_1,x_2,x_3,x_4) \; ; \; x_4^2= x_1^2+x_2^2+x_3^2\right\rbrace$ (see $\cite{KoP}$).
In $\cite{N1}$, the following result on the singular set of $n$-uniform measures is proven. 

\begin{theorem} [\cite{N1}] \label{N1}
Let $\nu$ be an $n$-uniform measure in $\RR^{d}$, $ 3 \leq n \leq d$.
Then
$$dim_{\mathcal{H}}(\mathcal{S}_{\nu}) \leq n-3.$$
\end{theorem}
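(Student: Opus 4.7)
The plan is to adapt Federer's dimension reduction technique, using Preiss's classification of $1$- and $2$-uniform measures (\cite{P}) as the base case.

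\textbf{Step 1 (The class is closed under tangents).} The scaling identity $\nu(B_r(x))=cr^n$ for all $x \in \mbox{supp}(\nu)$ and all $r>0$ is preserved by the rescalings $c_i T_{x_i,r_i\sharp}\nu$ and by weak-$*$ limits. Hence every tangent measure $\sigma$ of $\nu$ at any point is itself $n$-uniform with the same constant $c$, and $0 \in \mbox{supp}(\sigma)$. In particular, the class of $n$-uniform measures is closed under iterated blow-ups, and any sequence of $n$-uniform measures with fixed density $c$ is precompact in the weak-$*$ topology.

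\textbf{Step 2 (Spine and product structure).} For an $n$-uniform measure $\sigma$ with $0 \in \mbox{supp}(\sigma)$ I would introduce the translation spine
\[
V(\sigma) = \{v \in \RR^d : (\tau_v)_\sharp \sigma = \sigma\},
\]
a linear subspace of $\RR^d$. A Fubini-type disintegration argument shows that if $\dim V(\sigma) = k$, then $\sigma = \mathcal{H}^k|_{V(\sigma)} \otimes \sigma'$, where $\sigma'$ is an $(n-k)$-uniform measure on $V(\sigma)^\perp$. Since $1$- and $2$-uniform measures are flat (\cite{P}), $k \geq n-2$ forces $\sigma$ itself to be flat; in particular $0 \notin \mathcal{S}_\sigma$.

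\textbf{Step 3 (Dimension reduction).} Suppose towards a contradiction that $\dim_{\mathcal{H}}(\mathcal{S}_\nu) > n - 3$, and fix $s$ with $n-3 < s \leq n-2$ and $\mathcal{H}^s(\mathcal{S}_\nu) > 0$. Select $x_0 \in \mathcal{S}_\nu$ of positive upper $s$-density in $\mathcal{S}_\nu$. Blowing up at $x_0$ produces a tangent $\nu_1$ whose singular set still contains $0$ with positive upper $s$-density (upper semi-continuity of singularity under weak-$*$ convergence in the $n$-uniform class). Iterating this construction, while carefully choosing centers whose mutual positions span as many independent directions as possible, I would produce a further iterated tangent $\nu^\ast$ whose singular set contains an $s$-dimensional linear subspace $W$ on which $\nu^\ast$ is translation-invariant, so that $\dim V(\nu^\ast) \geq s > n-3$. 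By Step 2 this forces $\nu^\ast$ to be flat, contradicting $0 \in \mathcal{S}_{\nu^\ast}$.

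\textbf{Main obstacle.} The delicate step is Step 3: translating the quantitative positive $s$-density of singularities at $x_0$ into the qualitative assertion that an iterated tangent is translation-invariant in $s$ independent directions. Doing this rigorously requires combining a careful center-and-scale selection (so that enough independent blow-up centers survive into the limit) with the ``tangent of a tangent is a tangent'' principle of Preiss; the rigidity afforded by $n$-uniformity, and in particular the polynomial control on densities it supplies, is what forces the limiting measure to retain all the symmetries accumulated along the iteration.
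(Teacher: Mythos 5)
Your outline follows the same Federer-style dimension-reduction philosophy as \cite{N1} (and as the paper's own Theorem \ref{maintheorem}, whose proof mirrors it): tangents of $n$-uniform measures are $n$-uniform, blow-ups at well-chosen singular points produce iterated tangents with translation symmetries, the spine splits off as a factor with a lower-dimensional uniform cross-section (this is Lemma \ref{transinvariant} plus the Kowalski--Preiss product theorems), and flatness of $1$- and $2$-uniform measures closes the argument. The genuine gap is the parenthetical assertion in Step 3 of ``upper semi-continuity of singularity under weak-$*$ convergence in the $n$-uniform class.'' This is not a routine fact; it is the technical heart of \cite{N1} and of the present paper (Theorem \ref{accumsing}). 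A point is singular when no tangent there is flat, which is a condition at arbitrarily small scales, and such a condition does not pass to weak limits: a priori the blow-up at the accumulation point could be flat even though every approximating point is singular, the non-flatness escaping to ever smaller scales. Ruling this out requires Preiss's quantitative gap between flat and non-flat uniform measures at infinity --- if $\nu$ is non-flat then $F(\nu_{0,R})>\epsilon_0$ for all large $R$ (Theorem \ref{flatnessinfty}, Corollary \ref{functionalflatness}), which is precisely where $n\geq 3$ enters --- combined with a connectedness-of-scales argument (Lemma \ref{connectedness}) that propagates a fixed flatness deficit $\kappa$ from small scales of $\mu$ up to large scales of an intermediate pseudo-tangent, contradicting flatness at infinity. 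Your ``Main obstacle'' paragraph locates the difficulty in selecting centers that span independent directions; that bookkeeping (handled in the paper by a Hausdorff-content density-point argument and a covering argument) is the easier half. Without the $\epsilon_0$-gap ingredient, nothing guarantees that the iterated tangents you construct are still singular, or that their singular sets retain positive $\mathcal{H}^{s}$-content through the iteration, and the contradiction in Step 3 evaporates.

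A secondary structural remark: rather than accumulating $\lceil s\rceil\geq n-2$ invariant directions in one limit, the actual argument proceeds by induction on $n$, splitting off one direction per blow-up at a nonzero singular point of a conical tangent, passing to the $(k-1)$-uniform cross-section via $\mathcal{S}_{\lambda}\subset \RR\times\mathcal{S}_{\lambda_0}$, and anchoring the induction at $n=3$, where conservation of singularities plus flatness of nonzero points of conical $3$-uniform measures yields local finiteness of the singular set (Corollary \ref{corollaryaccum}). Your one-shot accumulation of symmetries could in principle be made to work, but it would still need exactly the missing persistence lemma at every stage.
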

Since an $n$-AOD measure has $n$-uniform pseudo-tangents, the same dimension bound should apply to its singular set. In this paper, we prove that this is indeed true. In fact, we prove that assuming the measure asymptotically doubles like any continuous function is enough to deduce such a dimension bound.

One can easily see the proof of Theorem $\ref{N1}$ works for measures that are merely uniform or uniformly distributed. 

\begin{definition} We say a measure $\nu$ is uniform if there exists a positive real-valued function $\phi$ on the positive real numbers such that for every $x$ in the support of $\nu$ and every $r>0$, $\nu(B_r(x))=\phi(r)$. 
\end{definition}

Using a theorem of Preiss (see $\cite{P}$) that states that for every uniform measure, there exists $n=\dim_{0}(\nu)$ such that $\lim_{r \to 0} \frac{\phi_{\nu}(r)}{r^{n}}$ exists, is positive and finite, we can show that $\nu$ $n$-uniform can be replaced by $\nu$ uniform with $\dim_{0} \nu = n$ in the statement of Theorem $\ref{N1}$.

With this in mind, we define the following more general notion of ``well-behavedness'' of the doubling ratio of a measure.

\begin{definition}\label{UAD}
Let $\mu$ be a Radon doubling measure in $\mathbb{R}^{d}$, $\Sigma=spt(\mu)$. We say $\mu$ is uniformly asymptotically doubling (UAD) if  there exists a continuous function $f_{\mu}: \Sigma \times \RR_{+} \to \mathbb{R}_+$, $f_{\mu}(x,1)=1$ for every $x \in \Sigma$ such that, for every $K$ compact with  $K \cap \Sigma \neq \emptyset$, and for every $\epsilon>0$, there exists $r_{K}>0$ such that:

\begin{equation}\label{UADdef}
r \leq r_{K} \implies \left| \frac{\mu(B_{tr}(x))}{\mu(B_r(x))} - f_{\mu}(x,t) \right| < \epsilon, \mbox{ for every } x \in K \cap \Sigma, \; t \in (0,1].
\end{equation}
\end{definition}

We first prove that if  a measure is UAD, then its pseudo-tangents
 are uniform.
 
\begin{theorem} \label{pseudotangentUAD}
Let $\mu$ be a uniformly asymptotically doubling measure in $\RR^{d}$. Then all pseudo-tangents of $\mu$  are uniform.
\end{theorem}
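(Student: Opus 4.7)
The plan is to show that $\nu(B_R(y))$ depends only on $R$, not on $y \in \mathrm{supp}(\nu)$. Let $\mu_i = c_i (T_{x_i, r_i})_\sharp \mu$ with $\mu_i \rightharpoonup \nu$, as in the definition of pseudo-tangent. For $y \in \mathrm{supp}(\nu)$, pick $y_i \in \mathrm{supp}(\mu_i)$ with $y_i \to y$, so that $z_i := x_i + r_i y_i$ lies in $\mathrm{supp}(\mu)$ and $z_i \to x$. By a standard weak-$*$ argument, for all radii $r$ outside a countable exceptional set $E_y$, one has $\mu_i(B_r(y_i)) \to \nu(B_r(y))$. For $0 < r \leq s$ outside $E_y$, I would then compute
\[
\frac{\nu(B_r(y))}{\nu(B_s(y))} \;=\; \lim_{i\to\infty} \frac{\mu(B_{r r_i}(z_i))}{\mu(B_{s r_i}(z_i))} \;=\; f_\mu(x, r/s),
\]
where the last equality uses the UAD hypothesis at $z_i$ with ball radius $s r_i$ and multiplier $r/s \in (0,1]$, together with $z_i \to x$ and continuity of $f_\mu$. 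Extending $f_\mu(x,\cdot)$ to $t > 1$ by $\tilde f(t) := 1/f_\mu(x, 1/t)$ (well-defined and positive since $\mu$ is doubling), and writing $\tilde f = f_\mu(x,\cdot)$ on $(0,1]$, the same computation yields $\nu(B_r(y)) = \nu(B_1(y))\,\tilde f(r)$ for every $r > 0$, and the semigroup property $\tilde f(ab) = \tilde f(a)\tilde f(b)$ drops out by telescoping the ratios.

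To finish, I would compare different base points using the triangle inequality. For $y, y' \in \mathrm{supp}(\nu)$ and $R > |y-y'|$, the inclusions $B_{R - |y-y'|}(y) \subset B_R(y') \subset B_{R + |y-y'|}(y)$ combined with $\nu(B_r(\cdot)) = \nu(B_1(\cdot))\,\tilde f(r)$ and multiplicativity of $\tilde f$ give
\[
\tilde f\!\left(1 - \tfrac{|y-y'|}{R}\right) \;\leq\; \frac{\nu(B_1(y'))}{\nu(B_1(y))} \;\leq\; \tilde f\!\left(1 + \tfrac{|y-y'|}{R}\right).
\]
As $R \to \infty$, both extreme bounds converge to $\tilde f(1) = 1$ by continuity of $\tilde f$ (inherited from continuity of $f_\mu$), so $\nu(B_1(y')) = \nu(B_1(y))$. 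Setting $\phi(r) := \nu(B_1(y_0))\,\tilde f(r)$ for any fixed $y_0 \in \mathrm{supp}(\nu)$ then exhibits $\nu$ as uniform.

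The main obstacle I anticipate is technical rather than conceptual: one has to justify $\mu_i(B_r(y_i)) \to \nu(B_r(y))$ with \emph{moving} centers, which requires sandwiching by concentric balls at radii with $\nu$-null spheres, and one has to check that a single compact neighborhood of $x$ can be used in the UAD hypothesis uniformly in $i$ (so that the same $r_K$ and the same $\epsilon$ control all of the $z_i$). The conceptual content---that multiplicativity forces $\tilde f$ to pin down ratios across all scales, so the residual $y$-dependence is washed out by sending $R \to \infty$---is then a short continuity argument.
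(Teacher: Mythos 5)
Your argument is correct, and it reaches the conclusion by a genuinely different mechanism than the paper. The paper fixes $x\in\mathrm{supp}(\nu)$ and squeezes $\mu_{\xi_i,r_i}(B_r(x))$ directly between quantities built from $f_\mu(\xi_i,\cdot)$; the delicate step there is transferring the ball center from the nearby support point $z_i$ to the blow-up center $\xi_i$, which is done by enlarging the scale by a large factor $\kappa$ so that the displacement $Mr_i$ becomes negligible, and it yields the exact formula $\nu(B_r(x))=f_\mu(\xi,r)$ with independence of $x$ manifest. You avoid the center transfer entirely at the level of the UAD hypothesis: you only invoke it for concentric balls at the moving centers $z_i\to x$ (legitimate, since eventually all $z_i$ lie in a fixed compact neighborhood of $x$ and the normalizations $c_i$ cancel in the ratios), obtaining the scale-invariance $\nu(B_r(y))/\nu(B_s(y))=f_\mu(x,r/s)$ for radii off a countable exceptional set, hence $\nu(B_r(y))=\nu(B_1(y))\,\tilde f(r)$ with $\tilde f$ multiplicative and continuous; the residual dependence on the center is then killed at the level of the limit measure by the $R\to\infty$ sandwich between $B_{R\mp|y-y'|}$. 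The technical points you flag (moving-center convergence via $\nu$-null spheres, positivity of $f_\mu$ from the doubling hypothesis, and the easy direction of support convergence needed to choose $y_i\in\mathrm{supp}(\mu_i)$ with $y_i\to y$) are all standard and fillable. What the paper's route buys is the sharper normalization $\nu(B_r(\cdot))=f_\mu(\xi,r)$ for the blow-ups $\mu_{\xi_i,r_i}$, which is actually used later (in Lemma \ref{connectedness} and Theorem \ref{accumsing}); your version identifies the distribution function only up to the constant $\nu(B_1(y_0))$, which suffices for the statement as posed (and can be pinned down with a small extra argument when $c_i=\mu(B_{r_i}(\xi_i))^{-1}$). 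A pleasant byproduct of your approach is that it shows $f_\mu(x,\cdot)$ is automatically multiplicative, hence a power of $t$, which the paper never makes explicit.
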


We use this to prove a bound on the singular set of UAD measures. 
The proof of Theorem $\ref{N1}$ relied on a dimension reduction argument based on the fact that for $n=3$, the geometry of the tangents to a $3$-uniform measure are well understood (see $\cite{N2}$). But a key aspect in dimension reduction arguments is that the measures have to satisfy a ``conservation of singularities under blow-ups'' property. In other words, the singular set of the measure in the argument has to blow-up to the singular set of its tangent object. We prove an analogous result for UAD measures.

\begin{theorem} \label{accumsing}
Let $\mu$ be a UAD measure.
Let $\left\lbrace x_j \right\rbrace_{j=0}^{\infty} \subset \mathcal{S}_{\mu}$, $\nu \in Tan(\mu,x_0)$, $\left\lbrace r_j \right\rbrace_{j}$ a sequence going to zero such that $\mu_{x_0,r_j} \rightharpoonup \nu$. Moreover, let $y_j= \frac{x_j-x_0}{r_j} \in B_1(0)$, $y_j \to y$.
Then $y \in \mathcal{S}_{\nu}$.
\end{theorem}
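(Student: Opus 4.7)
The plan is to argue by contradiction via a \emph{blow-up of a blow-up} diagonal argument, exploiting the fact that $x_0$ itself lies in $\mathcal{S}_\mu$ (as the $j=0$ entry of the given sequence). Suppose toward a contradiction that $y\notin \mathcal{S}_\nu$. Granting the standard characterization of the regular set as the set of points of $\mathrm{supp}(\nu)$ admitting at least one flat pseudo-tangent, this produces scales $s_k\downarrow 0$ and a flat measure $\tau$ (a constant multiple of $\mathcal{H}^n$ on an $n$-plane through $0$) such that $\nu_{y, s_k}\rightharpoonup \tau$.

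The key computation is the identity linking nested rescalings. With the normalization $\sigma_{z,s}=(T_{z,s})_{\sharp}\sigma/\sigma(B_s(z))$, the relation $T_{y_j,s}\circ T_{x_0,r_j}=T_{x_j,s r_j}$ gives
\[
(\mu_{x_0,r_j})_{y_j,s}\;=\;\mu_{x_j,s r_j}.
\]
For each fixed $s_k$ chosen from the co-countable set $\{s>0:\nu(\partial B_s(y))=0\}$, continuity of the translation-rescaling operation under weak-$*$ convergence, applied to $\mu_{x_0,r_j}\rightharpoonup \nu$ together with $y_j\to y$, should give $\mu_{x_j,s_k r_j}\rightharpoonup \nu_{y,s_k}$ as $j\to \infty$. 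A standard diagonal extraction then produces $j(k)\to\infty$ with $\mu_{x_{j(k)}, s_k r_{j(k)}}\rightharpoonup \tau$ as $k\to\infty$. Since $x_{j(k)}\to x_0$ and $s_k r_{j(k)}\to 0$, this exhibits $\tau$ as a pseudo-tangent to $\mu$ at $x_0$, and the flatness of $\tau$ forces $x_0\in \mathcal{R}_\mu$, contradicting $x_0\in \mathcal{S}_\mu$.

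The main obstacle, and the step where the UAD hypothesis is really used, is the $j\to\infty$ limit for fixed $s_k$: verifying $(\mu_{x_0,r_j})_{y_j,s_k}\rightharpoonup \nu_{y,s_k}$ reduces to controlling the normalizing ratios $\mu(B_{s_k r_j}(x_j))/\mu(B_{r_j}(x_0))$ along centers $x_j$ that themselves depend on $j$. This is where the uniform convergence $\mu(B_{tr}(x))/\mu(B_r(x))\to f_\mu(x,t)$ on compact sets, combined with continuity of $f_\mu$ in both $x$ and $t$, intervenes to make the limit go through. Modulo this normalization bookkeeping and the choice of generic $s_k$, the remainder is formal weak-$*$ manipulation and diagonalization, and the argument should closely parallel the corresponding accumulation-of-singularities result for uniform measures proved in \cite{N1}.
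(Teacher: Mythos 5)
There is a genuine gap, and it is in the very last step. The flat measure $\tau$ you produce is a pseudo-tangent of $\mu$ at $x_0$ taken along the \emph{moving} centers $x_{j(k)}$, and the existence of one flat pseudo-tangent (or even one flat tangent) at $x_0$ does not contradict $x_0\in\mathcal{S}_{\mu}$: by Definition \ref{flat}, $x_0$ is regular only if \emph{all} tangent measures at the fixed point $x_0$ are multiples of $\mathcal{H}^n\res V$ for a single plane $V$, and the ``standard characterization of the regular set as the set of points admitting at least one flat pseudo-tangent'' that you invoke is false. Concretely, let $\mu=\mathcal{H}^{3}\res\mathcal{C}$ with $\mathcal{C}$ the Kowalski--Preiss cone in $\RR^{4}$ and $x_0=0$ its vertex, a singular point; blow-ups centered at smooth points $x_j\in\mathcal{C}$ with $|x_j|\sim r_j$ at scales $s\,r_j$ converge to flat measures, so flat pseudo-tangents at this singular point abound, and your diagonal construction goes through without producing any contradiction. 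The same example shows that the theorem is simply false if only $x_0$ is assumed singular (take $x_j$ smooth points of the cone with $|x_j|=r_j/2$ along a fixed ray: then $y$ is a regular point of $\nu$), yet your argument never uses the hypothesis $x_j\in\mathcal{S}_{\mu}$ for $j\geq 1$, which is the decisive one. You would also need to prove $y\in\mbox{supp}(\nu)$, since $\mathcal{S}_{\nu}\subset\mbox{supp}(\nu)$; this is part of what the paper establishes.

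For comparison, the paper's proof runs in the opposite direction: it propagates \emph{non-flatness} upward in scale rather than trying to propagate flatness down to $x_0$. Since each $x_j$ is singular, its tangent $\nu^{j}$ is a non-flat uniform measure, hence curved at infinity by Preiss's dichotomy (Corollary \ref{functionalflatness}); a diagonal argument then yields scales $\sigma_j\ll r_j$ with $\mu_{x_j,\sigma_j}\rightharpoonup\nu^{\infty}$, a non-flat pseudo-tangent. Separately --- and this is the part your identity $(\mu_{x_0,r_j})_{y_j,s}=\mu_{x_j,s r_j}$ together with the UAD control of the normalizing ratios does capture, paralleling the paper's use of Theorem \ref{pseudotangentUAD} and continuity of $f_{\mu}$ --- one shows $y\in\mbox{supp}(\nu)$, $\mu_{x_j,r_j}\rightharpoonup\nu_{y}$, and constructs intermediate scales $\tilde{\tau}_k$ with $\sigma_{l_k}<\tilde{\tau}_k$ and $\mu_{x_{l_k},\tilde{\tau}_k}\rightharpoonup\alpha$, where $\alpha$ is the tangent of $\nu$ at $y$ (unique up to constants by Theorem \ref{Preiss}, since $\nu$ is uniform). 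The contradiction is then delivered by the connectedness Lemma \ref{connectedness}, which says that along the same sequence of centers flatness of the larger-scale limit forces flatness of the smaller-scale limit: if $\alpha$ were flat, $\nu^{\infty}$ would be flat. Your proposal is missing both the construction of the non-flat small-scale limit (the only place the singularity of the $x_j$ enters) and the connectedness lemma that legitimizes comparing limits at different scales; without these, the diagonal extraction alone proves nothing about $\mathcal{S}_{\nu}$.
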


Once this theorem is proven, we can apply a dimension reduction argument to get the following final theorem.

\begin{theorem}\label{maintheorem}
Let $\mu$ be an $n$-UAD measure in $\RR^{d}$, $ 3 \leq n \leq d$.
Then
$$dim_{\mathcal{H}}(\mathcal{S}_{\nu}) \leq n-3.$$
\end{theorem}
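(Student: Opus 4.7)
The plan is a Federer-style dimension reduction argument that marries the three preceding ingredients: Theorem \ref{pseudotangentUAD} guarantees that any blow-up of $\mu$ is a uniform measure $\nu$; Theorem \ref{accumsing} transfers singularities of $\mu$ into singularities of $\nu$; and the uniform-measure extension of Theorem \ref{N1} (available via Preiss's theorem, as noted in the discussion preceding Definition \ref{UAD}) bounds the Hausdorff dimension of the singular set of any uniform tangent with $\dim_0 \nu = n$ by $n-3$.

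Suppose for contradiction that $\dim_{\mathcal{H}}(\mathcal{S}_\mu) > n-3$, and fix $s$ with $n-3 < s \leq \dim_{\mathcal{H}}(\mathcal{S}_\mu)$. Then $\mathcal{H}^s(\mathcal{S}_\mu) > 0$, and Frostman's lemma produces a nonzero finite Radon measure $\sigma$ supported in the closed set $\mathcal{S}_\mu$ satisfying $\sigma(B_r(x)) \leq C r^s$ for every $x \in \RR^d$, $r > 0$. A standard upper-density argument then yields a point $x_0 \in \mathrm{supp}(\sigma) \subset \mathcal{S}_\mu$, a constant $c > 0$, and a sequence $r_j \downarrow 0$ with $\sigma(B_{r_j}(x_0)) \geq c r_j^s$.

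Next I would perform the blow-up. Choose normalizations $c_j > 0$ so that, after extracting a subsequence, $\mu_j := c_j (T_{x_0, r_j})_\sharp \mu \rightharpoonup \nu$ for some $\nu \in \mathrm{Tan}(\mu, x_0)$; by Theorem \ref{pseudotangentUAD} combined with Preiss's theorem, $\nu$ is uniform with $\dim_0 \nu = n$. In parallel, define $\sigma_j := r_j^{-s} (T_{x_0, r_j})_\sharp \sigma$; each $\sigma_j$ is supported in the rescaled singular set $\mathcal{S}_j := (T_{x_0, r_j})^{-1}(\mathcal{S}_\mu)$ and satisfies $\sigma_j(B_1(0)) \geq c$ together with the uniform Frostman bound $\sigma_j(B_r(y)) \leq C r^s$. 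A further extraction gives $\sigma_j \rightharpoonup \sigma_\infty$, the Frostman bound persists in the limit, and $\sigma_\infty(\overline{B_1(0)}) \geq c$, so the mass distribution principle yields $\mathcal{H}^s(\mathrm{supp}(\sigma_\infty)) > 0$.

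The decisive step is to argue $\mathrm{supp}(\sigma_\infty) \subset \mathcal{S}_\nu$: any $y \in \mathrm{supp}(\sigma_\infty)$ is approximable by points $y_j \in \mathcal{S}_j$ (since $\mathrm{supp}(\sigma_j) \subset \mathcal{S}_j$), and writing $y_j = (x_j - x_0)/r_j$ with $x_j \in \mathcal{S}_\mu$, Theorem \ref{accumsing} forces $y \in \mathcal{S}_\nu$. This gives $\dim_{\mathcal{H}}(\mathcal{S}_\nu) \geq s > n-3$, contradicting Theorem \ref{N1} for the uniform tangent $\nu$. I expect the main obstacle to be transferring the lower mass bound cleanly through the blow-up: one needs both that no mass of $\sigma_j$ escapes along the extraction and that the Kuratowski upper limit of the rescaled singular sets $\mathcal{S}_j$ actually contains $\mathrm{supp}(\sigma_\infty)$, so that Theorem \ref{accumsing} can be applied pointwise. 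This is the standard but delicate bridge between Hausdorff-measure lower bounds and weak-convergence arguments, and care is also required to verify that the normalization constants $c_j$ chosen to extract $\nu$ do not interfere with the unrelated normalization $r_j^{-s}$ used to extract $\sigma_\infty$.
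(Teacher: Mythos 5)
Your skeleton for the key new step — blow up $\mu$ at a well-chosen point of $\mathcal{S}_{\mu}$, use Theorem \ref{accumsing} to force limits of rescaled singular points into $\mathcal{S}_{\nu}$, and conclude $\dim_{\mathcal{H}}(\mathcal{S}_{\nu})\geq s$ — is the same as the first stage of the paper's proof, but your mass-retention step has a genuine gap. From a Frostman measure $\sigma$ with $\sigma(B_r(x))\leq Cr^{s}$ you cannot deduce the existence of $x_0$, $c>0$ and $r_j\downarrow 0$ with $\sigma(B_{r_j}(x_0))\geq c r_j^{s}$: when $s<\dim_{\mathcal{H}}(\mathcal{S}_{\mu})$ (exactly your situation) the Frostman measure may have vanishing upper $s$-density at every point — for instance Lebesgue measure on a planar square with $s=1$ satisfies the Frostman bound but $\sigma(B_r(x))/r^{s}\to 0$ everywhere. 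Without that lower bound, $\sigma_j(B_1(0))=r_j^{-s}\sigma(B_{r_j}(x_0))$ can tend to $0$ and your limit $\sigma_{\infty}$ may be the zero measure, so no contradiction is produced. The standard repair is either to first pass (Besicovitch--Davies) to a compact $K\subset\mathcal{S}_{\mu}$ with $0<\mathcal{H}^{s}(K)<\infty$ and use the density theorems for $\mathcal{H}^{s}\res K$ (upper $s$-density between $2^{-s}$ and $1$ at a.e.\ point, which gives both the lower bound along a sequence and, on a further subset, a uniform Frostman bound), or to do what the paper does: work directly with Hausdorff content, choose $x_0$ with $\theta^{s,*}(\mathcal{H}^{s}_{\infty}\res(\mathcal{S}_{\mu}\cap K),x_0)\geq 2^{-s}$, and transfer content positivity to $\mathcal{S}_{\nu}\cap\overline{B_1(0)}$ by the $\epsilon$-neighborhood inclusion from Theorem \ref{accumsing} plus a finite open covering argument; no auxiliary measure or second weak limit is then needed. (Minor points: the definition of $n$-UAD only gives $\dim_0\nu\leq n$, not $=n$, which is harmless; and both your Frostman step and the paper's compactness step implicitly use that $\mathcal{S}_{\mu}$ is sufficiently regular, so I do not count that against you.)

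The more structural divergence is your final step: you close the argument by quoting the ``uniform-measure extension'' of Theorem \ref{N1}. The paper asserts that extension only as a remark in the introduction; its actual proof of Theorem \ref{maintheorem} does not use it as a black box. Instead, after the first transfer it continues the reduction: it blows up the uniform tangent $\nu$ again at a singular density point to obtain a conical $k$-uniform measure (via Theorem \ref{Preiss}), blows up once more at a nonzero singular point and applies Lemma \ref{transinvariant} to get $\lambda$ with $\mathrm{supp}(\lambda)=\RR\times A$ and $\mathcal{S}_{\lambda}\subset\RR\times\mathcal{S}_{\lambda_0}$ for a $(k-1)$-uniform $\lambda_0$, and then closes an induction on the dimension using only the literal $n$-uniform statement of Theorem \ref{N1}, with the base case $n=3$ handled by Corollary \ref{corollaryaccum}. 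So if you are permitted to cite the uniform version of Theorem \ref{N1} wholesale, your argument (once the density step above is repaired) is a legitimate and shorter route; if you are not, then your proposal omits precisely the half of the proof where the paper does most of its work — the second and third blow-ups, the translation-invariance lemma, and the dimension-reduction induction.
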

\newpage
\section{Preliminaries}

\begin{definition}
Let $\mu$ be a measure in $\RR^{d}$. We define the support of $\mu$ to be 
\begin{equation}
\mbox{supp}(\mu)=\left\lbrace x \in \RR^{d} ; \mu(B_r(x)>0, \mbox{ for all } r>0 \right\rbrace. 
\end{equation}
Note that the support of a measure is a closed subset of $\RR^{d}$.
\end{definition}
We can define weak convergence for a sequence of Radon measures.
\begin{definition}
Let $\Phi$, $\Phi_j$, $j>0$ be Radon measures in $\RR^{d}$.
We say that $\Phi_j$ converges weakly to $\Phi$ if for every $f \in C_{c}(\RR^{d})$, the following holds:
\begin{equation}
\int f(z) d\Phi_j (z) \to \int f(z) d\Phi(z).
\end{equation} 
We denote it by $ \Phi_j \rightharpoonup \Phi$.
\end{definition}
The results in this section appear in this form in $\cite{M}$.
\begin{theorem}
Let $\Phi_j$ be a sequence of Radon measures on $\RR^{d}$.Then $\Phi_j \rightharpoonup \Phi$, if and only if for any $K$ compact subset of $\RR^{d}$ and any $G$ open subset of $\RR^{d}$ the following hold:
\begin{enumerate}
\item $\Phi(K) \geq \limsup \Phi_{j}(K).$
\item $\Phi(G) \leq \liminf \Phi_{j}(G).$
\item For any point x there exists a set $S_x \subset \RR_{+}$ at most countable such that
$\Phi_{j}(B_{\rho}(x)) \to \Phi(B_{\rho}(x))$ for every $ \rho \in \RR_{+} - S_x$ .
\end{enumerate}

\end{theorem}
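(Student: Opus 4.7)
The plan is to establish both directions of the equivalence by adapting the standard Portmanteau-type argument to Radon measures, tying the three conditions together through outer/inner regularity of $\Phi$ and Urysohn approximations.

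For the forward direction, assume $\Phi_j \rightharpoonup \Phi$. For (1), fix a compact $K$ and $\epsilon > 0$. By outer regularity of the Radon measure $\Phi$, choose an open $U \supset K$ with $\Phi(U) < \Phi(K) + \epsilon$. Urysohn's lemma yields $f \in C_c(\RR^d)$ with $\one_K \leq f \leq \one_U$. Then $\Phi_j(K) \leq \int f \, d\Phi_j \to \int f \, d\Phi \leq \Phi(U) < \Phi(K) + \epsilon$; taking $\limsup$ in $j$ and then $\epsilon \to 0$ gives (1). Condition (2) is symmetric: for open $G$ and $\epsilon > 0$, inner regularity provides a compact $K \subset G$ with $\Phi(K) > \Phi(G) - \epsilon$; Urysohn gives $f \in C_c(\RR^d)$ with $\one_K \leq f \leq \one_G$, and $\Phi_j(G) \geq \int f \, d\Phi_j \to \int f \, d\Phi \geq \Phi(G) - \epsilon$.

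For (3), fix $x \in \RR^d$. The spheres $\partial B_\rho(x)$ are pairwise disjoint, and $\Phi(\overline{B_R(x)}) < \infty$ for each $R$, so at most countably many $\rho \in (0, R]$ can carry positive $\Phi$-mass on $\partial B_\rho(x)$. Let $S_x$ be the union over integer $R$ of these exceptional values; it is countable. For $\rho \notin S_x$, applying (1) to $\overline{B_\rho(x)}$ and (2) to $B_\rho(x)$, together with $\Phi(\overline{B_\rho(x)}) = \Phi(B_\rho(x))$, gives
$$\Phi(B_\rho(x)) \leq \liminf_j \Phi_j(B_\rho(x)) \leq \limsup_j \Phi_j(\overline{B_\rho(x)}) \leq \Phi(\overline{B_\rho(x)}) = \Phi(B_\rho(x)),$$
which forces full convergence $\Phi_j(B_\rho(x)) \to \Phi(B_\rho(x))$.

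For the converse, assume (1), (2), and (3), and fix $f \in C_c(\RR^d)$; writing $f = f^+ - f^-$, it suffices to show $\int f^+ \, d\Phi_j \to \int f^+ \, d\Phi$. The layer cake representation gives $\int f^+ \, d\mu = \int_0^{\|f\|_\infty} \mu(\{f^+ > t\}) \, dt$, where $\{f^+ > t\}$ is open and $\{f^+ \geq t\}$ is compact (both sitting inside $\supp f$). By (2), $\Phi(\{f^+ > t\}) \leq \liminf_j \Phi_j(\{f^+ > t\})$, and by (1), $\limsup_j \Phi_j(\{f^+ \geq t\}) \leq \Phi(\{f^+ \geq t\})$. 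The two level sets differ only on $\{f^+ = t\}$, which has positive $\Phi$-measure for at most countably many $t$ (total mass bounded by $\Phi(\supp f) < \infty$), so for Lebesgue-a.e. $t$ we have $\Phi_j(\{f^+ > t\}) \to \Phi(\{f^+ > t\})$. Condition (1) applied to $\supp f$ shows $\Phi_j(\supp f)$ is eventually bounded, providing the domination required to exchange the limit with the $t$-integral via dominated convergence. The main obstacle is this exchange step: one must extract pointwise a.e. convergence of the distribution functions by sandwiching them between the open- and closed-level-set estimates, and the uniform bound needed for dominated convergence must itself be harvested from condition (1), which is why (1) is not merely a consequence but an essential ingredient of the characterization.
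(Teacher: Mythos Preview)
The paper does not supply its own proof of this theorem: it is stated in the Preliminaries as a background result, with the sentence ``The results in this section appear in this form in [M]'' immediately preceding it. There is therefore no in-paper argument to compare against.

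Your proof is correct and is essentially the standard Portmanteau argument adapted to locally finite measures. The forward direction via Urysohn approximation and outer/inner regularity is clean, and your derivation of (3) from (1) and (2) using the countability of radii with $\Phi(\partial B_\rho(x))>0$ is exactly right. For the converse, the layer-cake argument works; note that you only use (1) and (2), which is expected since (3) is a consequence of (1) and (2) and plays no independent role in the equivalence. One small point worth making explicit: the domination you extract from (1) gives $\Phi_j(\supp f)\le M$ only for $j$ beyond some index $J$, but since you are passing to the limit in $j$ this is harmless---dominated convergence applies to the tail sequence.
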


\begin{theorem}\label{weakconv}
Let $\Phi_j$ be a sequence of Radon measures on $\RR^{d}$ such that
$$ \sup_{j}(\Phi_j(K)) < \infty ,$$
for all compact sets $K \subset \RR^{d}$. Then there is a weakly convergent subsequence of $\Phi_j$.
\end{theorem}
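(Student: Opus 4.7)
The plan is to combine a Cantor diagonal extraction with the Riesz representation theorem. The space $C_c(\RR^d)$ is separable in the appropriate sense: exhausting $\RR^d$ by the closed balls $K_m = \overline{B_m(0)}$, I would pick a countable family $\{f_k\}_{k\geq 1} \subset C_c(\RR^d)$ such that every $f \in C_c(\RR^d)$ with $\supp f \subset K_m$ can be uniformly approximated by members of $\{f_k\}$ whose supports still lie in $K_{m+1}$ (for instance, by taking continuous piecewise-linear functions with rational data cut off by smooth bump functions adapted to the $K_m$).

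For each fixed $k$, the estimate $\bigl|\int f_k \, d\Phi_j\bigr| \leq \|f_k\|_\infty \, \Phi_j(\supp f_k)$ combined with the hypothesis $\sup_j \Phi_j(K) < \infty$ on compact $K$ shows that the real sequence $\bigl\{\int f_k \, d\Phi_j\bigr\}_j$ is bounded. A standard Cantor diagonal extraction then produces a single subsequence $\{\Phi_{j_l}\}$ along which the limit $L(f_k) := \lim_{l\to\infty} \int f_k \, d\Phi_{j_l}$ exists for every $k$.

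Next I would extend $L$ to all of $C_c(\RR^d)$. Given $f$ with $\supp f \subset K_m$, choose $f_{k_n} \to f$ uniformly with $\supp f_{k_n} \subset K_{m+1}$. The estimate
\begin{equation*}
\Bigl|\int f \, d\Phi_{j_l} - \int f_{k_n}\, d\Phi_{j_l}\Bigr| \leq \|f - f_{k_n}\|_\infty \cdot \sup_{j} \Phi_j(K_{m+1})
\end{equation*}
is uniform in $l$, so $\{\int f \, d\Phi_{j_l}\}_l$ is Cauchy and converges to some value $L(f)$. The functional $L$ is linear, positive (being a pointwise limit of positive functionals on nonnegative test functions), and satisfies $|L(f)| \leq \|f\|_\infty \sup_j \Phi_j(\supp f)$, hence is continuous on each $C_c(K_m)$. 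The Riesz representation theorem therefore furnishes a Radon measure $\Phi$ with $L(f) = \int f \, d\Phi$, and by construction $\int f \, d\Phi_{j_l} \to \int f \, d\Phi$ for every $f \in C_c(\RR^d)$, i.e.\ $\Phi_{j_l} \rightharpoonup \Phi$.

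The main subtlety is the bookkeeping of supports in the separability step: one must choose the countable dense family $\{f_k\}$ so that any $f$ supported in $K_m$ can be uniformly approximated by elements whose supports stay inside a \emph{single} enlargement $K_{m+1}$. This is precisely what allows the uniform mass bound $\sup_j \Phi_j(K_{m+1}) < \infty$ to absorb the uniform approximation error; without this discipline, the passage from convergence on $\{f_k\}$ to convergence on all of $C_c(\RR^d)$ would fail. Once the dense family is arranged this way, positivity of $L$ and the Riesz identification are routine.
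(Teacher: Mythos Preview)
Your argument is correct and is essentially the standard proof of weak-$*$ sequential compactness for Radon measures: diagonal extraction on a countable dense family in $C_c(\RR^d)$, extension of the limit functional by uniform approximation with controlled supports, and identification of the resulting positive linear functional via Riesz representation. The bookkeeping you flag---approximating $f$ with $\supp f\subset K_m$ by elements supported in a fixed $K_{m+1}$---is exactly the point that makes the uniform mass bound do its job, and you handle it properly.

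As for comparison with the paper: there is nothing to compare. The paper does not supply its own proof of this theorem; it is listed in the Preliminaries section among results quoted from Mattila's book \cite{M} (``The results in this section appear in this form in \cite{M}''). Your proof is the standard one and is in the same spirit as the argument found there.
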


We now want to define a metric on the space of Radon measures.

\begin{definition}\label{L(r)}
Let $0<r<\infty$. We denote by $\mathcal{L}(r)$ the set of all non-negative Lipschitz functions $f$ on $\RR^{d}$ with $spt(f) \subset B_r(0)$ and with $Lip(f) \leq 1$. For Radon measures $\Phi$ and $\Psi$ on $\RR^d$, set
$$F_{r}(\Phi, \Psi)= \sup\left\lbrace \left| \int f d\Phi - \int f d\Psi \right| : f \in \mathcal{L}(r) \right\rbrace.$$
We also define $\mathcal{F}$ to be 
$$\mathcal{F}(\Phi, \Psi) = \sum_{k} 2^{-k} F_{k}(\Phi,\Psi). $$ 
It is easily seen that $F_r$ satisfies the triangle inequality for each $r>0$ and that $\mathcal{F}$ is a metric.
\end{definition}

\begin{proposition}
Let $\Phi$, $\Phi_{k}$ be Radon measures on $\RR^{d}$. Then the following are equivalent:
\begin{enumerate}
\item $\Phi_j \rightharpoonup \Phi.$
\item $\lim \mathcal{F}(\Phi_j, \Phi) \to 0$
\item For all $r>0$, $\lim_{j \to \infty} F_{r}(\Phi_{j}, \Phi)=0.$

\end{enumerate}

\end{proposition}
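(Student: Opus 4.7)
The plan is to establish the two equivalences $(2) \Leftrightarrow (3)$ and $(1) \Leftrightarrow (3)$, which together give the three-way equivalence. The bulk of the content is in $(1) \Leftrightarrow (3)$; the pair $(2) \Leftrightarrow (3)$ is a routine metric computation that I would handle first.

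For $(2) \Leftrightarrow (3)$, I would use monotonicity of $F_r$ in $r$ (immediate from $\mathcal{L}(r) \subset \mathcal{L}(r')$ for $r \leq r'$). One direction is clean: the single-term bound $F_k(\Phi_j, \Phi) \leq 2^k \mathcal{F}(\Phi_j, \Phi)$ implies that if $\mathcal{F}(\Phi_j, \Phi) \to 0$, then $F_k(\Phi_j, \Phi) \to 0$ for every integer $k$, and by monotonicity for every $r > 0$. The reverse implication uses the split $\mathcal{F} = \sum_{k \leq K} 2^{-k} F_k + \sum_{k > K} 2^{-k} F_k$: the head goes to $0$ as $j \to \infty$ for fixed $K$ by $(3)$, while the tail is controlled using $F_k \leq k(\Phi_j(B_k(0)) + \Phi(B_k(0)))$ together with the fact that, by testing $(3)$ against Lipschitz cutoffs $\eta_k$ equal to $1$ on $B_k(0)$ and supported in $B_{k+1}(0)$, one obtains $\sup_j \Phi_j(B_k(0)) < \infty$ for each $k$.

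For $(1) \Rightarrow (3)$, the key tool is Arzela--Ascoli applied to the family $\mathcal{L}(r)$, which is equicontinuous (Lipschitz with constant $\leq 1$) and uniformly bounded (by $r$) on the compact set $\overline{B_r(0)}$. Suppose for contradiction $F_r(\Phi_j, \Phi) \not\to 0$; then one extracts a subsequence and $f_j \in \mathcal{L}(r)$ with $|\int f_j \, d\Phi_j - \int f_j \, d\Phi| \geq \varepsilon_0$, and Arzela--Ascoli gives a further subsequence $f_{j_k} \to f$ uniformly with limit $f \in \mathcal{L}(r)$. Weak convergence, via the characterization theorem applied to a bounded open set slightly containing $\overline{B_r(0)}$, yields a uniform local mass bound $\sup_k \Phi_{j_k}(B_{r+1}(0)) < \infty$, which combined with $\|f_{j_k} - f\|_\infty \to 0$ gives $\int f_{j_k} \, d\Phi_{j_k} \to \int f \, d\Phi$; similarly $\int f_{j_k} \, d\Phi \to \int f \, d\Phi$, contradicting the lower bound.

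For $(3) \Rightarrow (1)$, I would approximate a test function $f \in C_c(\RR^d)$ uniformly by Lipschitz functions: given $\varepsilon > 0$ and $\supp(f) \subset B_r(0)$, mollification produces a Lipschitz $g$ with $\supp(g) \subset B_{r+1}(0)$ and $\|f - g\|_\infty < \varepsilon$. Decomposing $g = g_+ - g_-$ with $g_\pm \geq 0$ Lipschitz of the same constant, the normalised $g_\pm / Lip(g) \in \mathcal{L}(r+1)$, so $(3)$ gives $\int g \, d\Phi_j \to \int g \, d\Phi$; a triangle inequality with the uniform mass bound on $B_{r+1}(0)$ then closes the estimate. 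The main technical obstacle throughout is securing uniform local mass bounds on the $\Phi_j$: under $(1)$ they come from the open-set lower-semicontinuity clause of the characterization theorem, and under $(3)$ they come from testing against Lipschitz cutoffs as above.
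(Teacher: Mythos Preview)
The paper does not give its own proof of this proposition; it is stated in the Preliminaries section as a standard fact imported from \cite{M}, so there is nothing to compare your argument against. Your overall strategy---handling $(2)\Leftrightarrow(3)$ by a head/tail split and $(1)\Leftrightarrow(3)$ via Arzel\`a--Ascoli plus Lipschitz approximation of $C_c$ test functions---is the standard route and your arguments for $(1)\Rightarrow(3)$ and $(3)\Rightarrow(1)$ are fine. (Minor quibble: for the local mass bound under $(1)$ you want the \emph{compact}-set upper-semicontinuity clause, $\limsup_j \Phi_j(\overline{B_r(0)})\le \Phi(\overline{B_r(0)})$, not the open-set clause.)

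There is, however, a genuine gap in your tail control for $(3)\Rightarrow(2)$. From $(3)$ you correctly extract, for each fixed $k$, a bound $\sup_j \Phi_j(B_k(0)) =: M_k < \infty$, and hence $F_k(\Phi_j,\Phi)\le k\bigl(M_k+\Phi(B_k(0))\bigr)$ uniformly in $j$. But nothing prevents $M_k$ or $\Phi(B_k(0))$ from growing faster than $2^k/k$, so the tail $\sum_{k>K}2^{-k}F_k(\Phi_j,\Phi)$ need not be small---indeed it can be $+\infty$. Concretely, if $\nu$ is any Radon measure with $\nu(B_k(0))\ge k!$ and $\Phi_j=\Phi+2^{-j}\nu$, then $\Phi_j\rightharpoonup\Phi$ and $F_r(\Phi_j,\Phi)\to 0$ for every $r$, yet $\mathcal{F}(\Phi_j,\Phi)=2^{-j}\sum_k 2^{-k}F_k(\nu,0)=+\infty$ for all $j$. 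So with $\mathcal{F}$ defined exactly as in the paper, the implication $(3)\Rightarrow(2)$ (and hence $(1)\Rightarrow(2)$) is actually false. This is not a flaw in your reasoning so much as a slip in the paper's transcription of the definition: in \cite{M} (Lemma~14.13) the metric is $\sum_k 2^{-k}\min\{1,F_k(\Phi,\Psi)\}$, and with that truncation your head/tail argument goes through immediately since the tail is dominated by $\sum_{k>K}2^{-k}$.
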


Let $\mu$ be a Radon measure on $\RR^{d}$ and $\Sigma$ its support.
For $a \in \RR^{d}$, $r>0$, define $T_{a,r}$ to be the following homothety that blows up $B_r(a)$  to $B_1(0)$:
\begin{equation} 
T_{a,r}(x)=\frac{x-a}{r} . \nonumber
\end{equation}
We define the image $T_{a,r}[\mu] $ of $\mu$ under $T_{a,r}$ to be the following measure:
\begin{align*}
T_{a,r}[\mu](A) & =\mu(T_{a,r}^{-1}(A)),\\ &=\mu(rA+a), \text{  } A \subset {\RR}^{d}. 
\end{align*}

\begin{definition}[\cite{P}]\label{tangent*}
We say that $\nu$ is a tangent measure of $\mu$ at a point $x_0 \in \RR^d$ if $\nu$ is a non-zero Radon measure on $\RR^n$ and if there exist sequences $(r_i)$ and $(c_i)$ of positive numbers such that $r_i \downarrow 0$ and:
\begin{equation} \label{tangent'}
c_i T_{x_0,r_i}[\mu]\rightharpoonup \nu \text{ as } i \rightarrow \infty, 
\end{equation}
where the convergence in ($\ref{tangent'}$) is the weak convergence of measures.
We write $\nu \in \mbox{Tan}(\mu,x_0)$.
 \end{definition}

\begin{remark}
 By Remark 14.4 in $\cite{M}$, if 
 \begin{equation} \label{upperlowerdensity}
 \limsup_{r \to 0} \frac{\mu(B_{2r}(x_0))}{\mu(B_r(x_0))} < \infty
 \end{equation}
 and if $\nu \in \text{Tan}(\mu,x_0)$, then we can choose ($r_i$) such that:
 \begin{equation} \label{tangent}
\mu(B_{r_i}(x_i))^{-1} T_{x_0,r_i}[\mu]\rightharpoonup c \nu \text{ as } i \rightarrow \infty,
 \end{equation}
for some $c>0$. 
We denote $\mu(B_{r}(x))^{-1} T_{x,r}[\mu]$ by $\mu_{x,r}$.
\end{remark}
The more general notion of pseudo-tangents was introduced by Toro and Kenig in $\cite{KT}$.
\begin{definition} 
Let $\mu$ be a doubling Radon measure in $\RR^{d}$. We say that $\nu$ is a pseudo-tangent measure of $\mu$ at the point $x \in \mbox{supp} \mu$ if $\nu$ is a nonzero Radon measure in $\RR^{d}$ and if there exists a sequence of points $x_i \in \mbox{supp} \mu$  such that $x_i \to x$ and  sequences of positive numbers $\left\lbrace r_i \right\rbrace$ and $\left\lbrace c_i \right\rbrace$ such that $r_i \downarrow 0$ and $c_i T_{{x_i},{r_i}} \sharp \mu \rightharpoonup \nu$.
\end{definition} 

\begin{definition}\label{flat}
A measure  on $\RR^d$ is called $n$-flat if it is equal to $ c \mathcal{H}^{n} \res V$, where $V$ is an $n$-plane, and $0<c<\infty$.

Let $\mu$ be a Radon measure on $\RR^d$ and $x_0$  be a point in the support $\Sigma$ of $\mu$. We will call $x_0$ a flat (or regular) point of $\Sigma$ if there exists an $n$-plane $V$ such that
\begin{equation}\label{flatpoint}
\text{Tan}(\mu,x_0)= \left\lbrace c \mathcal{H}^{n} \res V \ ; \ c >0 \right\rbrace. 
\end{equation}

Any point of $\Sigma$ that is not flat will be called a singular (or non-flat) point.

\end{definition}

\begin{definition}
Let $\mu$ be a Radon measure in $\RR^d$.
\begin{itemize}
\item We say $\mu$ is uniformly distributed or uniform if there exists a positive function $\phi : \RR_{+} \rightarrow \RR_{+}$ such that:
$$
\mu(B_r(x))=\phi(r), \text{ for all } x \in \Sigma, r>0.$$
We call $\phi$ the distribution function of $\mu$.

\item If there exists $c>0$ such that $\phi(r)=c r^n$, we say $\mu$ is $n$-uniform.

\item If $\mu$ is an $n$-uniform measure such that $T_{0,r}[\mu] = r^{n} \mu $ for all $r>0$, we call it a conical $n$-uniform measure.

\end{itemize}

\end{definition}

In [$\cite{P}$, Theorem $3.11$], Preiss showed that if $\mu$ is a uniform measure, there exists a unique $p$-uniform measure $\lambda$ such that:
\begin{equation}
r^{-n} T_{x,r} [\mu] \rightharpoonup \lambda, \mbox{ as } r \to \infty, 
\end{equation}
for all $x \in \RR^{d}$. $\lambda$ is called the tangent measure of $\mu$ at $\infty$.

\begin{theorem}[\cite{P}, Theorem 3.11] \label{dim0} Suppose $\mu$ is a uniform measure in $\mathbb{R}^d$, and let $f$ be its distribution function. Then there exist integers $n$ and $p$ such that: $$ \lim_{r \to 0} \frac{f(r)}{r^n} \mbox{ and } \lim_{r \to \infty} \frac{f(r)}{r^p} \mbox{  both exist and are in } (0,\infty)$$ 
We denote $n$ and $p$ by $$n=dim_0 \mu \mbox{ and } p=dim_{\infty} \mu.$$ \end{theorem}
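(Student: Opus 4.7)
The plan is a Federer-type dimension reduction argument, assembling Theorem~\ref{pseudotangentUAD}, Theorem~\ref{accumsing}, and the extension of Theorem~\ref{N1} to uniform measures noted in the introduction. The $n$-UAD hypothesis is used to ensure that every pseudo-tangent $\nu$ of $\mu$ satisfies $\dim_0 \nu = n$ (via Theorem~\ref{pseudotangentUAD} combined with Theorem~\ref{dim0}), so that the bound $\dim_{\mathcal{H}}(\mathcal{S}_\nu) \leq n-3$ for uniform measures indeed applies at that value of $n$.

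Suppose for contradiction that $\dim_{\mathcal{H}}(\mathcal{S}_\mu) > n-3$, and fix $s$ with $n-3 < s < \dim_{\mathcal{H}}(\mathcal{S}_\mu)$, so that $\mathcal{H}^{s}(\mathcal{S}_\mu) = +\infty$. By the standard upper density theorem for Hausdorff measures (see $\cite{M}$), there exist $x_0 \in \mathcal{S}_\mu$, a constant $c_0 > 0$, and a sequence $r_j \downarrow 0$ with
$$\mathcal{H}^{s}_\infty\bigl(\mathcal{S}_\mu \cap \overline{B}_{r_j}(x_0)\bigr) \geq c_0 \, r_j^{s} \qquad \mbox{for all } j \geq 1.$$
Since $\mu$ is doubling, $\{\mu_{x_0,r_j}\}$ has uniformly bounded mass on each ball, so by Theorem~\ref{weakconv} we may pass to a subsequence along which $\mu_{x_0,r_j} \rightharpoonup \nu$ for some $\nu \in \mbox{Tan}(\mu, x_0)$. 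Theorem~\ref{pseudotangentUAD} shows $\nu$ is uniform, and the $n$-UAD hypothesis identifies $\dim_0 \nu$ with $n$.

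Now transfer the dimension lower bound to $\nu$. Set $E_j := T_{x_0, r_j}\bigl(\mathcal{S}_\mu \cap \overline{B}_{r_j}(x_0)\bigr) \subset \overline{B}_1(0)$; by the scaling of Hausdorff content, $\mathcal{H}^{s}_\infty(E_j) \geq c_0$. Since closed subsets of $\overline{B}_1$ form a compact space in the Hausdorff distance, we may extract a further subsequence with $E_j \to E$ in Hausdorff distance for some compact $E \subset \overline{B}_1$. For each $y \in E$, choose $y_j \in E_j$ with $y_j \to y$, and write $y_j = (x_j - x_0)/r_j$ with $x_j \in \mathcal{S}_\mu$; Theorem~\ref{accumsing} then yields $y \in \mathcal{S}_\nu$, so $E \subset \mathcal{S}_\nu$. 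Since $\mathcal{H}^{s}_\infty$ is upper semicontinuous under Hausdorff convergence of compact sets (any $\epsilon$-enlarged finite covering of $E$ eventually covers $E_j$),
$$\mathcal{H}^{s}_\infty(\mathcal{S}_\nu \cap \overline{B}_1) \geq \mathcal{H}^{s}_\infty(E) \geq \limsup_j \mathcal{H}^{s}_\infty(E_j) \geq c_0 > 0,$$
so $\dim_{\mathcal{H}}(\mathcal{S}_\nu) \geq s > n-3$, contradicting the extension of Theorem~\ref{N1} to uniform measures with $\dim_0 = n$.

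The main obstacle is the transfer of the dimension lower bound from $\mathcal{S}_\mu$ to $\mathcal{S}_\nu$ under blow-up. Two ingredients are crucial and can fail in weaker regimes: that the blow-up limit $\nu$ lies in a class for which the Federer-style bound is already available (provided by Theorem~\ref{pseudotangentUAD}), and that singular points of $\mu$ accumulate on singular points of $\nu$ rather than becoming regular in the limit (provided precisely by Theorem~\ref{accumsing}). Working with Hausdorff content $\mathcal{H}^{s}_\infty$ rather than Hausdorff measure is essential, since it is upper semicontinuous under Hausdorff convergence of compact sets, and hence the lower bound $c_0$ on $E_j$ survives the passage to $E \subset \mathcal{S}_\nu$.
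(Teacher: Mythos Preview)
Your proposal does not address Theorem~\ref{dim0} at all. Theorem~\ref{dim0} is Preiss's result that for any uniform measure $\mu$ with distribution function $f$, there exist integers $n$ and $p$ such that $\lim_{r\to 0} f(r)/r^n$ and $\lim_{r\to\infty} f(r)/r^p$ exist in $(0,\infty)$. The paper does not prove this statement; it is quoted from \cite{P} and used as a black box. A proof of it would involve Preiss's analysis of the moment expansion of a uniform measure (or an equivalent argument), showing that $f(r)$ is asymptotically a power of $r$ at both ends.

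What you have written is instead an argument for Theorem~\ref{maintheorem}, the bound $\dim_{\mathcal H}(\mathcal S_\mu)\le n-3$ for $n$-UAD measures. Even viewed in that light, there is a substantive issue: you invoke ``the extension of Theorem~\ref{N1} to uniform measures with $\dim_0=n$'' as a ready-made result to contradict, but the paper does not establish that bound for general uniform measures independently of the present argument---the dimension reduction and the base case $n=3$ (Corollary~\ref{corollaryaccum}) are themselves part of the machinery being built here. Your single blow-up step transfers positive $\mathcal H^s_\infty$-content to $\mathcal S_\nu$ correctly (using Theorem~\ref{accumsing} and upper semicontinuity of content under Hausdorff convergence), but you then need an \emph{inductive} step: pass to a conical tangent, then to a tangent at a nonzero singular point to split off a line (Lemma~\ref{transinvariant}), and reduce to an $(m-1)$-uniform measure. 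Without that, you are assuming the conclusion for uniform measures rather than proving it. Also, your claim that ``the $n$-UAD hypothesis identifies $\dim_0\nu$ with $n$'' is not quite what the paper asserts: $n$-UAD gives $\dim_0\nu\le n$, which is all that the induction needs.

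In short: the proposal is not a proof of the stated theorem, and as a proof of Theorem~\ref{maintheorem} it short-circuits the induction by appealing to a result that is not available independently.
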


\begin{theorem}[3.11, \cite{P}]\label{Preiss}
 Let $\mu$ be a uniform measure in $\RR^{d}$. Then, for every $x \in \Sigma \cup \left\lbrace \infty \right\rbrace$, there exist integers $n=dim_{0} \mu$ and $p=dim_{\infty} \mu$ , a unique conical $n$-uniform measure ${\lambda}_x$ and a unique conical $p$-uniform measure such that:
 \begin{itemize}
 \item Tan($\mu$,$x$)=$\left\lbrace c \lambda_{x} ; c>0 \right\rbrace$
 \item $\lim_{r \to 0} \mathcal{F}(\mu_{x,r} , {\lambda}_{x})= 0$ if $x \neq \infty$.
 \item $\lim_{r \to \infty} \mathcal{F}(\mu_{y,r} , \lambda_{\infty})=0$ for each $y \in \RR^{d}$.
\end{itemize}
 Moreover, for $\mu$-almost every $x \in \Sigma$, ${\lambda}_{x}$ is flat.

\end{theorem}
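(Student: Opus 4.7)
The plan is a Federer-style dimension reduction combining the three ingredients already established: the uniformity of pseudo-tangents (Theorem \ref{pseudotangentUAD}), the conservation of singularities under blow-up (Theorem \ref{accumsing}), and the dimension bound for uniform measures (Theorem \ref{N1}). Suppose for contradiction that $\dim_\mathcal{H}(\mathcal{S}_\mu) > n-3$ and fix $t \in (n-3,\dim_\mathcal{H}(\mathcal{S}_\mu))$, so that $\mathcal{H}^t(\mathcal{S}_\mu) > 0$.

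Apply Frostman's lemma to obtain a Radon measure $\sigma$ supported on $\mathcal{S}_\mu$ with $\sigma(\mathcal{S}_\mu)>0$ and satisfying the uniform upper bound $\sigma(B_r(y)) \leq r^t$ for every $y \in \RR^d$ and every $r>0$. A standard density-point / Vitali argument then produces a point $x_0 \in \mathcal{S}_\mu$, a constant $c>0$, and a sequence $r_j \downarrow 0$ for which $\sigma(B_{r_j}(x_0)) \geq c\, r_j^t$.

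Now blow up simultaneously: consider $\mu_{x_0,r_j}$ together with the rescaled Frostman measures $\sigma_j := r_j^{-t}\,(T_{x_0,r_j})_{\sharp}\sigma$. After extracting subsequences, using Theorem \ref{weakconv} and the uniform total-mass bound $\sigma_j(B_R(0)) \leq R^t$ that is inherited from the Frostman condition, we obtain $\mu_{x_0,r_j} \rightharpoonup \nu$ and $\sigma_j \rightharpoonup \sigma_\infty$, where $\nu \in \mathrm{Tan}(\mu, x_0)$. By Theorem \ref{pseudotangentUAD}, $\nu$ is a uniform measure; by Theorem \ref{Preiss} and the $n$-UAD hypothesis, $\dim_0 \nu = n$. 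The lower bound on mass passes to the limit (using the first criterion for weak convergence on the closed ball) to give $\sigma_\infty(\overline{B_1(0)}) \geq c$, while the Frostman ball growth $\sigma_\infty(B_r(y)) \leq r^t$ survives in the limit (using the open-ball criterion). In particular, $\mathrm{supp}(\sigma_\infty) \cap \overline{B_1(0)}$ is nonempty and, by the mass distribution principle, $\mathcal{H}^t\bigl(\mathrm{supp}(\sigma_\infty) \cap \overline{B_1(0)}\bigr) > 0$.

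The final step is where Theorem \ref{accumsing} does the decisive work: every point $y \in \mathrm{supp}(\sigma_\infty)$ is a limit of points $y_j = (x_j - x_0)/r_j$ with $x_j \in \mathcal{S}_\mu$ (since the $\sigma_j$ are supported on such rescaled copies of $\mathcal{S}_\mu$), so Theorem \ref{accumsing} forces $y \in \mathcal{S}_\nu$. Hence $\mathrm{supp}(\sigma_\infty) \cap \overline{B_1(0)} \subseteq \mathcal{S}_\nu$, giving $\dim_\mathcal{H}(\mathcal{S}_\nu) \geq t > n-3$. This contradicts Theorem \ref{N1} applied in its uniform-measure form (using $\dim_0 \nu = n$) to the uniform measure $\nu$, which bounds $\dim_\mathcal{H}(\mathcal{S}_\nu) \leq n-3$. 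The main obstacle is the joint blow-up of $\mu$ and $\sigma$: we must arrange that the Frostman mass does not escape to infinity (handled by the $r^t$ growth bound, which forces tightness on $\overline{B_1(0)}$) and that the inclusion $\mathrm{supp}(\sigma_\infty) \subseteq \mathcal{S}_\nu$ truly holds — this is exactly the content of Theorem \ref{accumsing}, which is the crucial input that separates the UAD case from the already-understood uniform case.
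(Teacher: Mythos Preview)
Your proposal does not address the stated theorem. Theorem~\ref{Preiss} is a result of Preiss on uniform measures (existence and uniqueness of conical tangent measures at each point and at infinity, together with flatness $\mu$-a.e.), quoted here from \cite{P} as a preliminary and not proved in the paper at all. Your argument is instead an attempt at Theorem~\ref{maintheorem}, the dimension bound $\dim_{\mathcal H}(\mathcal S_\mu)\le n-3$ for $n$-UAD measures. Indeed, your proof \emph{invokes} Theorem~\ref{Preiss} as an ingredient (``by Theorem~\ref{Preiss} and the $n$-UAD hypothesis, $\dim_0\nu=n$''), so it cannot be a proof of that statement; as a proof of Theorem~\ref{Preiss} it is simply circular.

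If one reads the proposal as a proof of Theorem~\ref{maintheorem}, it is a legitimate alternative to the paper's argument. The paper runs an iterated dimension reduction: from $\mu$ to a tangent $\nu$, then to a conical tangent $\tilde\nu$, then to a translation-invariant conical tangent $\lambda$ that splits off a line (Lemma~\ref{transinvariant}), and finally an induction on the lower-dimensional uniform factor $\lambda_0$; at each step positive $\mathcal H^s_\infty$-content is transferred via Theorem~\ref{accumsing} and upper-density estimates. Your route is shorter: a single Frostman blow-up transfers positive $\mathcal H^t$-content directly from $\mathcal S_\mu$ to $\mathcal S_\nu$ for a uniform $\nu$, and then Theorem~\ref{N1} (in its uniform-measure form) finishes the job as a black box. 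This trades the explicit induction for a wholesale appeal to Theorem~\ref{N1}. One small correction: the $n$-UAD hypothesis gives only $\dim_0\nu\le n$, since $n$ is defined as a supremum over $x\in\Sigma$; this is harmless, as Theorem~\ref{N1} then yields $\dim_{\mathcal H}(\mathcal S_\nu)\le \dim_0\nu-3\le n-3$ and the contradiction still goes through.
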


\begin{definition}\label{normalized} Let $\mu$ be a uniform measure in $\RR^{d}$, $x_0 \in \mbox{supp}(\mu) \cup \left\lbrace \infty \right\rbrace$. We will call $\mu^{x_0}$ the normalized tangent measure to $\mu$ at $x_0$ if $\mu^{x_0} \in \mbox{Tan}(\mu,x_0)$, and $\mu^{x_0}(B_1(0))=\omega_n$.
 \end{definition}
 
One of the most remarkable results in Preiss' paper $\cite{P}$ is a separation between flat and non-flat measures at infinity. We will state a reformulation of this theorem by De Lellis from $\cite{Del}$ which is better adapted to our needs.

\begin{theorem}[\cite{P}]\label{flatnessinfty}
Let $\mu$ be an  $n$-uniform measure in $\RR^{d}$, $\zeta$ its normalized tangent at $\infty$  (in the sense of Definition $\eqref{normalized}$). If $n \geq 3$, then there exists $\epsilon_{0}>0$ (depending only on $n$ and $d$) such that, if \begin{equation}
\min_{V \in G(n,d)} \int_{B_1(0)} dist^{2}(z,V) d\zeta(z) \leq \epsilon_{0},
\end{equation}
then $\mu$ is flat.

In particular, if $\mu$ is conical and \begin{equation} \min_{V \in G(n,d)} \int_{B_1(0)} dist^{2}(z,V) d\mu(z) \leq \epsilon_{0},
\end{equation} then $\mu$ is flat.
\end{theorem}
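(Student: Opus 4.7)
The plan is to reduce the general statement to the ``in particular'' (conical) clause and then invoke Preiss's rigidity for conical $n$-uniform measures. By Theorem \ref{Preiss}, the normalized tangent $\zeta$ of $\mu$ at $\infty$ is conical $n$-uniform. If the conical case is established, the hypothesis on $\zeta$ forces $\zeta$ to be flat; one then appeals to Preiss's companion result that any $n$-uniform $\mu$ whose tangent at $\infty$ is flat must itself be flat (this is a central pillar of Preiss's original proof, and is implicit in the De Lellis reformulation), concluding that $\mu$ is flat.

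For the conical case, I would argue by contradiction. Assume no such $\epsilon_0$ works; then there is a sequence $\nu_j$ of non-flat conical $n$-uniform measures with $\nu_j(B_1(0)) = \omega_n$ (a normalization consistent with Definition \ref{normalized} and achievable by conical rescaling), together with planes $V_j \in G(n,d)$ satisfying $\int_{B_1(0)} \mathrm{dist}^2(z,V_j)\,d\nu_j(z) \to 0$. Conical $n$-uniformity gives $\nu_j(B_R(0)) = \omega_n R^n$ for every $R>0$, so by Theorem \ref{weakconv} and a diagonal argument one extracts a subsequence with $\nu_j \rightharpoonup \nu$ weakly, and by compactness of $G(n,d)$ one may also assume $V_j \to V$. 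The weak limit $\nu$ remains conical $n$-uniform with $\nu(B_1(0)) = \omega_n$. Since $\mathrm{dist}(\cdot,V_j) \to \mathrm{dist}(\cdot,V)$ uniformly on $B_1(0)$, one checks $\int_{B_1(0)} \mathrm{dist}^2(z,V)\,d\nu = 0$, so $\mathrm{supp}(\nu) \cap B_1(0) \subset V$; by conical scaling $\mathrm{supp}(\nu) \subset V$, and $n$-uniformity then forces $\nu = \mathcal{H}^n \res V$, a flat measure.

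The central obstacle is converting this convergence of non-flat measures to a flat limit into an actual contradiction: this is precisely Preiss's rigidity theorem for conical $n$-uniform measures when $n \geq 3$, and it is by far the deepest ingredient. The strategy is to introduce a weakly continuous ``moment'' functional $b(\nu)$ built from Gaussian integrals of $\nu$ (morally, a second derivative in $s$ of $\int e^{-s|x|^2/2}\,d\nu(x)$ suitably normalized), show $b$ vanishes exactly on flat measures, and, crucially, show that $b$ takes values in a set bounded away from $0$ on the non-flat conical $n$-uniform measures. The hypothesis $n \geq 3$ is essential: for $n = 1,2$ every conical $n$-uniform measure is already flat by Preiss's classification, so the statement is vacuous; for $n \geq 3$ the quantized structure of $b$, obtained through a secondary dimension-reduction inside the compact space of conical $n$-uniform measures with fixed density, provides the required gap. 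Weak continuity of $b$ together with $b(\nu_j) \geq \delta > 0$ and $b(\nu) = 0$ then yields the contradiction, completing the conical case and, via the reduction above, the full theorem.
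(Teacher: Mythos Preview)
The paper does not supply its own proof of Theorem~\ref{flatnessinfty}: it is stated in the Preliminaries as a result of Preiss, cited from \cite{P} (in the reformulation of De Lellis \cite{Del}), and is used thereafter as a black box via Corollary~\ref{functionalflatness}. So there is no ``paper's proof'' to compare against.

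Your sketch is an accurate outline of the architecture of Preiss's original argument: reduce to the conical case via the tangent at infinity, and for the conical case show a quantitative gap between flat and non-flat conical $n$-uniform measures. You correctly identify the only genuinely deep step --- that non-flat conical $n$-uniform measures cannot accumulate on a flat one --- and you correctly attribute it to Preiss's moment functionals. That said, as written your proposal is not a self-contained proof: both the ``companion result'' (tangent at infinity flat $\Rightarrow$ $\mu$ flat) and the rigidity gap are precisely the content of Theorem~\ref{flatnessinfty} in \cite{P}, so invoking them to prove the theorem is circular. If your intent was only to indicate the proof structure rather than to reprove the result, that is fine and matches what the present paper does, namely quote the statement and cite \cite{P} and \cite{Del}.
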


$\cite{Del}$  defines certain functionals that measure how far from flat a measure is and behave well under weak convergence.

\begin{definition} \label{functional}
Let $\varphi \in C_{c}(B_1(0))$, $0 \leq \varphi \leq 1$ and $\varphi=1$ on $B_1(0)$.
We define the functional $F: \mathcal{M}(\RR^{d}) \to \RR$ as
$$ F(\Phi): = \min_{V \in G(n,d)} \frac{1}{\Phi(B_1(0)}\int \varphi(z) {dist}^{2}(z,V)d\Phi(z)$$
\end{definition}

\begin{lemma}[\cite{Del}]\label{contfunctional}
Let $\Phi_{j}$ , $\Phi$ be Radon measures such that $\Phi_{j} \rightharpoonup \Phi$.
Then $F(\Phi_{j}) \to F(\Phi)$. 
\end{lemma}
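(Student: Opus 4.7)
The plan is to prove the two inequalities $\limsup_j F(\Phi_j) \leq F(\Phi)$ and $\liminf_j F(\Phi_j) \geq F(\Phi)$ separately, using weak convergence applied to the continuous, compactly supported integrand $\varphi(z)\,\mathrm{dist}^2(z,V)$, and then invoking compactness of the Grassmannian $G(n,d)$ to handle the minimum over $V$. The two pieces I will need up front are: (i) for every fixed $V \in G(n,d)$, the function $g_V(z) := \varphi(z)\,\mathrm{dist}^2(z,V)$ is continuous and compactly supported, so $\int g_V\,d\Phi_j \to \int g_V\,d\Phi$ by weak convergence; and (ii) the denominator $\Phi_j(B_1(0))$ tends to $\Phi(B_1(0))$ (using that $\varphi = 1$ on $B_1(0)$, so this weight integral is continuous under weak convergence in the relevant regime).

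For the upper bound, I pick $V^\ast$ achieving $F(\Phi)$ and use it as a test plane for each $\Phi_j$, so that
\begin{equation*}
F(\Phi_j) \;\leq\; \frac{1}{\Phi_j(B_1(0))}\int g_{V^\ast}\,d\Phi_j \;\longrightarrow\; \frac{1}{\Phi(B_1(0))}\int g_{V^\ast}\,d\Phi \;=\; F(\Phi),
\end{equation*}
by (i) and (ii). For the lower bound, for each $j$ let $V_j \in G(n,d)$ attain the minimum in $F(\Phi_j)$; by compactness of $G(n,d)$ pass to a subsequence $V_{j_k} \to V^\ast$. Writing
\begin{equation*}
\int g_{V_{j_k}}\,d\Phi_{j_k} \;=\; \int g_{V^\ast}\,d\Phi_{j_k} \;+\; \int\bigl(g_{V_{j_k}} - g_{V^\ast}\bigr)\,d\Phi_{j_k},
\end{equation*}
the first term converges to $\int g_{V^\ast}\,d\Phi$ by (i). The second is controlled by $\|g_{V_{j_k}} - g_{V^\ast}\|_\infty \cdot \Phi_{j_k}(\mathrm{supp}\,\varphi)$, and the supremum over $j$ of $\Phi_j(\mathrm{supp}\,\varphi)$ is finite (weak convergence forces local uniform boundedness by Theorem \ref{weakconv}). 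Dividing by $\Phi_{j_k}(B_1(0))$ yields $F(\Phi_{j_k}) \to F(\Phi)$ along the subsequence since $V^\ast$ is a valid competitor for $F(\Phi)$, and the same argument applied to any subsequence gives $\liminf_j F(\Phi_j) \geq F(\Phi)$.

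The main obstacle is the uniform convergence $g_{V_{j_k}} \to g_{V^\ast}$, which must be justified carefully. The key fact is that $V \mapsto \mathrm{dist}(z,V)$ is Lipschitz in $V$ (with respect to the operator norm of orthogonal projections onto elements of $G(n,d)$), uniformly on bounded sets of $z$; since $\varphi$ is compactly supported, the relevant $z$-region is bounded, so $g_{V_{j_k}} \to g_{V^\ast}$ uniformly. A minor technical point is that the denominator convergence requires that $\Phi$ assigns no mass to $\partial B_1(0)$; in the applications within this paper (tangent measures of UAD measures) this holds for the radii being considered, or one replaces $\Phi(B_1(0))$ by the continuous substitute $\int \varphi\,d\Phi$ without changing the analysis. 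Modulo this normalization, the argument is a standard two-sided comparison combining weak convergence of Radon measures with Grassmannian compactness.
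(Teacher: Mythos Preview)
The paper does not supply its own proof of this lemma; it is quoted from \cite{Del} without argument, so there is nothing in the paper to compare against. Your proof is the standard one and is essentially what appears in De Lellis's monograph: upper semicontinuity by fixing a minimizing plane for $\Phi$, lower semicontinuity by extracting a convergent subsequence of minimizing planes via compactness of $G(n,d)$.

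One remark on the denominator issue you flag at the end. As written in Definition~\ref{functional} the normalization is $\Phi(B_1(0))$, and you are right that weak convergence alone does not give $\Phi_j(B_1(0)) \to \Phi(B_1(0))$ without knowing $\Phi(\partial B_1(0)) = 0$. In \cite{Del} this is handled exactly as you suggest, by normalizing with $\int \varphi\, d\Phi$ rather than $\Phi(B_1(0))$ (or equivalently by choosing the cutoff so that the two agree); the paper's Definition~\ref{functional} is slightly informal on this point (note also the inconsistency that $\varphi \in C_c(B_1(0))$ with $\varphi \equiv 1$ on $B_1(0)$ is impossible as stated). Your treatment of this is appropriate.
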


Theorem $\ref{flatnessinfty}$ is easily reformulated in terms of the functionals $F$ in the following way.

\begin{corollary}\label{functionalflatness}
Let $\mu$ be a uniform measure on $\RR^{d}$. If $n \geq 3$, there exists $\epsilon_{0}>0$ (depending only on $n$ and $d$) such that 
$$\limsup_{R \to \infty} F(\mu_{0,R}) \leq \epsilon_{0} \implies \mu \mbox{ is flat}.  $$
In particular, if $\mu$ is conical and $F(\mu) \leq \epsilon_{0}$ then $\mu$ is flat.
\end{corollary}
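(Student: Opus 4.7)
The plan is to reduce the corollary to Theorem \ref{flatnessinfty} by transferring the $F$-bound on the rescaled measures $\mu_{0,R}$ to a flatness-deficit bound on the tangent at infinity $\lambda_\infty$, using the continuity of $F$ under weak convergence. Since $F$ is a scale-invariant reformulation of the flatness functional appearing in Theorem \ref{flatnessinfty}, the whole argument should be short.

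First I would use Theorem \ref{Preiss} applied at $y=0$, which guarantees that $\mathcal{F}(\mu_{0,R},\lambda_\infty)\to 0$ as $R\to\infty$; by the proposition identifying convergence in $\mathcal{F}$ with weak convergence, this gives $\mu_{0,R}\rightharpoonup \lambda_\infty$. Then Lemma \ref{contfunctional} yields
\[
F(\mu_{0,R})\longrightarrow F(\lambda_\infty) \quad\text{as } R\to\infty,
\]
and the hypothesis $\limsup_{R\to\infty} F(\mu_{0,R})\leq \epsilon_0$ forces $F(\lambda_\infty)\leq \epsilon_0$. Because the normalization $1/\Phi(B_1(0))$ in Definition \ref{functional} makes $F$ invariant under multiplying the measure by a positive constant, the same bound holds for the normalized tangent at infinity $\zeta$ of $\mu$ (which is a positive multiple of $\lambda_\infty$). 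Since $\zeta(B_1(0))=\omega_n$ is a universal constant and $\varphi$ dominates a fixed positive multiple of the indicator of $B_1(0)$, a short comparison gives a universal constant $C=C(n,d)$ with
\[
\min_{V\in G(n,d)}\int_{B_1(0)} \mathrm{dist}^2(z,V)\,d\zeta(z)\leq C\,F(\zeta)\leq C\epsilon_0.
\]
Choosing the $\epsilon_0$ of the corollary to be the $\epsilon_0$ of Theorem \ref{flatnessinfty} divided by $C$, we are in position to invoke Theorem \ref{flatnessinfty} and conclude that $\mu$ is flat.

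For the ``in particular'' statement, if $\mu$ is conical $n$-uniform, then $T_{0,R}[\mu]=R^n\mu$ and $\mu(B_R(0))=\mu(B_1(0))R^n$, so $\mu_{0,R}=\mu(B_1(0))^{-1}\mu$ is just a fixed positive rescaling of $\mu$; by scale-invariance of $F$ we have $F(\mu_{0,R})=F(\mu)$ for every $R$, so the hypothesis $F(\mu)\leq\epsilon_0$ immediately places us in the setting of the first part. The only step requiring attention is the passage from the $F$-functional to the unweighted integral appearing in Theorem \ref{flatnessinfty}; this is a routine comparison once the support and lower-bound properties of $\varphi$ are pinned down, and does not produce a real obstacle.
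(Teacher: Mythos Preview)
Your proposal is correct and is exactly the argument the paper intends: the paper gives no proof beyond calling the corollary an ``easy reformulation'' of Theorem~\ref{flatnessinfty}, and your route---pass to the tangent at infinity via Theorem~\ref{Preiss}, use Lemma~\ref{contfunctional} to transfer the $F$-bound, then compare $F$ with the unweighted integral (here the comparison is immediate since $\varphi\equiv 1$ on $B_1(0)$, giving $C=\zeta(B_1(0))=\omega_n$)---is the natural filling-in. The conical case is handled correctly as well, since a conical measure equals its own tangent at infinity.
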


Let us define the notion of asymptotically optimally doubling measures.
\begin{definition} If $x \in \Sigma$, $r>0$ and $t \in (0,1]$, define the quantity:
\begin{equation}
R_{t}(x,r)= \frac{\mu(B_{tr}(x))}{\mu(B_r(x))} - t^{n}.
\end{equation}
We say $\mu$ is asymptotically optimally doubling if for each compact set $K \subset \Sigma$, $x \in K$, and $t \in [\frac{1}{2},1]$ 
\begin{equation}\label{asymptoptim}
\lim_{r \to 0^{+}} \sup_{x \in K} \left| R_{t}(x,r) \right| = 0.
\end{equation}

\end{definition}
 The  following theorem from $\cite{KiP}$ states that uniformly distributed measures don't grow too fast.

\begin{theorem}[\cite{KiP}, Lemma 1.1]\label{doubling}
Let $\mu$ be a uniformly distributed measure over $\RR^{d}$, $x \in \RR^{d}$, $0<s<r<\infty$ and $\phi$ its distribution function. Then $\mu(B_r(x)) \leq 5^{d} \left( \frac{r}{s} \right)^{d} \phi(s)$.
\end{theorem}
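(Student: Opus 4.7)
The plan is to reduce the estimate to a volume-packing argument against a maximal $s$-separated subset of $B_r(x)\cap\Sigma$. Since $\mu$ is supported on $\Sigma$, I may assume $B_r(x)\cap\Sigma\neq\emptyset$; otherwise $\mu(B_r(x))=0$ and the inequality is trivial.

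First, I would choose a maximal finite collection $\{y_1,\ldots,y_N\}\subset B_r(x)\cap\Sigma$ satisfying $|y_i-y_j|\geq s$ whenever $i\neq j$. Maximality forces every $y\in B_r(x)\cap\Sigma$ to lie within distance $s$ of some $y_i$, so $B_r(x)\cap\Sigma\subset\bigcup_{i=1}^{N}B_s(y_i)$. Because $\mu$ is uniformly distributed with distribution function $\phi$ and each $y_i\in\Sigma$, one has $\mu(B_s(y_i))=\phi(s)$ for every $i$. Subadditivity therefore gives $\mu(B_r(x))=\mu(B_r(x)\cap\Sigma)\leq N\phi(s)$.

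The remaining task is to bound $N$. The balls $B_{s/2}(y_i)$ are pairwise disjoint (their centers are separated by at least $s$) and each is contained in $B_{r+s/2}(x)$, since $y_i\in B_r(x)$. Writing $\omega_d$ for the volume of the unit ball in $\RR^d$ and comparing $d$-dimensional Lebesgue volumes yields $N\,\omega_d (s/2)^d\leq \omega_d (r+s/2)^d$, i.e., $N\leq\bigl((2r+s)/s\bigr)^d$. The hypothesis $s<r$ gives $2r+s<3r\leq 5r$, hence $N\leq(5r/s)^d$, and combining with the previous step yields $\mu(B_r(x))\leq 5^d(r/s)^d\phi(s)$, as required.

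This is a standard packing estimate and there is no serious obstacle; the constant $5^d$ is quite loose (even $3^d$ works under the hypothesis $s<r$). The only role played by the uniform-distribution property is to convert the combinatorial covering count $N$ into the measure bound $N\phi(s)$ by assigning the common mass $\phi(s)$ to each covering ball $B_s(y_i)$; the rest is pure Euclidean volume packing.
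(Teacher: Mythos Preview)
Your proof is correct and is the standard packing argument. Note, however, that the paper does not actually supply its own proof of this statement: it is quoted as Lemma~1.1 of \cite{KiP} in the Preliminaries section and used as a black box later on. So there is no in-paper proof to compare against; your argument is essentially the one found in \cite{KiP}, and your observation that the constant $5^d$ can be sharpened to $3^d$ under the hypothesis $s<r$ is also accurate.
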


\newpage

\section{Pseudo-tangents of Uniformly Asymptotically Doubling measures}
We first introduce the notion of a uniformly asymptotically doubling measure.
\begin{definition}
Let $\mu$ be a Radon doubling measure in $\mathbb{R}^{d}$, $\Sigma=spt(\mu)$. We say $\mu$ is uniformly asymptotically doubling (UAD) if  there exists a continuous function $f_{\mu}: \Sigma \times \RR_{+} \to \mathbb{R}_+$, $f_{\mu}(x,1)=1$ for every $x \in \Sigma$ such that, for every $K$ compact with  $K \cap \Sigma \neq \emptyset$, and for every $\epsilon>0$, there exists $r_{K}>0$ such that:

\begin{equation}
r \leq r_{K} \implies \left| \frac{\mu(B_{tr}(x))}{\mu(B_r(x)} - f_{\mu}(x,t) \right| < \epsilon, \mbox{ for every } x \in K \cap \Sigma, \; t \in (0,1].
\end{equation}

We will denote $f_{\mu}$ by $f$ when there is no ambiguity in doing so and call $f_{\mu}$ the distribution function associated to $\mu$. We also denote $\frac{\mu(B_{tr}(x))}{\mu(B_r(x))}$ by $R(x,r,t)$ and call it the doubling ratio of $\mu$.
\end{definition}

The proof of the following two lemmas is similar to the proofs of Lemma [2.1] and Theorem [2.1] in $\cite{KT}$.
\begin{lemma}
Let $\mu$ be a uniformly asymptotically doubling measure in $\mathbb{R}^d$. Assume $\xi_i \to \xi$, $r_i \to 0$ and $\mu_{\xi_i, r_i} \rightharpoonup \nu$. If $\Sigma=spt(\mu)$ then 
$$x \in spt(\nu) \iff \mbox{ there exists a sequence } x_i \in \frac{\Sigma-\xi_{i}}{r_i} \; \mbox{ such that } x_i \to x.$$
\end{lemma}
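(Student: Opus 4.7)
The plan is to treat the two implications separately, handling the forward direction by a soft weak-convergence argument and the backward direction by using UAD to transfer a quantitative lower bound on normalized mass to the limit measure $\nu$.

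For the forward direction, suppose $x \in \mathrm{spt}(\nu)$, so $\nu(B_\rho(x)) > 0$ for every $\rho > 0$. By the lower semicontinuity clause of weak convergence (item 2 of Theorem 2.1 in the preliminaries), $\liminf_i \mu_{\xi_i,r_i}(B_\rho(x)) \geq \nu(B_\rho(x)) > 0$, so $B_\rho(x) \cap \mathrm{spt}(\mu_{\xi_i,r_i}) \neq \emptyset$ for all large $i$. Since $\mathrm{spt}(\mu_{\xi_i,r_i}) = (\Sigma-\xi_i)/r_i$, a standard diagonal argument over $\rho = 1/k$ produces the required sequence $x_i \in (\Sigma-\xi_i)/r_i$ with $x_i \to x$.

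For the backward direction, given $x_i \in (\Sigma-\xi_i)/r_i$ with $x_i \to x$, write $y_i := \xi_i + r_i x_i \in \Sigma$ and note $y_i \to \xi$. Fix $\rho > 0$; I aim to show $\nu(\overline{B_\rho(x)}) > 0$. For large $i$ one has $B_{\rho/2}(x_i) \subset \overline{B_\rho(x)}$, and $|\xi_i - y_i| = r_i|x_i| \leq (|x|+1)r_i$, so with $R := |x|+2$ the inclusion $B_{r_i}(\xi_i) \subset B_{Rr_i}(y_i)$ gives
\[
\mu_{\xi_i,r_i}(\overline{B_\rho(x)}) \;\geq\; \frac{\mu(B_{\rho r_i/2}(y_i))}{\mu(B_{r_i}(\xi_i))} \;\geq\; \frac{\mu(B_{\rho r_i/2}(y_i))}{\mu(B_{Rr_i}(y_i))} \;=\; R\!\left(y_i,\, Rr_i,\, \tfrac{\rho}{2R}\right).
\]
The points $\{y_i\}\cup\{\xi\}$ form a compact subset of $\Sigma$, so the UAD hypothesis together with continuity of $f_\mu$ forces this last quantity to converge to $f_\mu(\xi,\rho/(2R))$ as $i\to\infty$. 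Then the Portmanteau statement (item 1 of Theorem 2.1) applied to the compact set $\overline{B_\rho(x)}$ yields $\nu(\overline{B_\rho(x)}) \geq f_\mu(\xi,\rho/(2R))$, and since $\rho$ is arbitrary this will put $x$ in $\mathrm{spt}(\nu)$.

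The main obstacle is to verify that $f_\mu(\xi,t)>0$ for small $t>0$; without this, the lower bound above is vacuous. I would argue as follows: since $\mu$ is a doubling measure, there is a constant $C_0$ with $R(x,r,1/2) \geq C_0^{-1}$ for all sufficiently small $r$, so passing to the limit in $r$ via UAD yields $f_\mu(\xi,1/2) \geq C_0^{-1}$. Iterating dyadically gives $f_\mu(\xi,2^{-k}) \geq C_0^{-k} > 0$. Since $t \mapsto R(x,r,t)$ is monotone nondecreasing (as a ratio of masses of nested balls), its pointwise limit $f_\mu(\xi,\cdot)$ is too, and therefore $f_\mu(\xi,t) > 0$ for every $t > 0$, closing the argument.
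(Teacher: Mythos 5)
Your argument is correct and essentially the paper's: the forward implication is the same soft weak-convergence fact (the paper phrases it via a test function rather than the open-ball Portmanteau inequality), and the backward implication uses the identical key estimate, bounding $\mu_{\xi_i,r_i}(\overline{B_\rho(x)})$ below by a doubling ratio recentered at the nearby support points $y_i$ (the paper's $z_i$) and passing to the limit via UAD, continuity of $f_\mu$, and upper semicontinuity on compact sets. Your closing dyadic argument that $f_\mu(\xi,t)>0$ is a harmless addition that makes explicit a positivity the paper uses implicitly.
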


\begin{proof}

We first prove that if $x_i \to x$ where $x_i=\frac{z_i - \xi_i}{r_i}$ for $z_i \in \Sigma$, $r\in (0,1)$ then $\nu(B_r(x))>0$.
Let $i_0$ be such that:

$$ i \geq i_0 \implies |x-x_i|<\frac{r}{2} \;, \; |z_i - \xi_i| \leq r_i |x_i| \leq r_i M, \mbox{ where } M=|x|+1.$$

Let $t= \frac{r}{2(M+1)}$.

Since $\mu$ is UAD, there exists $R>0$ such that for $y \in B_1(\xi)$, $0<r<R$, $$\frac{1}{2} f_{\mu}(y,t) \leq \frac{\mu(B_{rt}(y))}{\mu(B_r(y))} \leq 2  f_{\mu}(y,t). $$ 
Since $\xi_i \to \xi$, $r_i \to 0$ and $|z_i-\xi_i| \leq Mr_i$, there exists $i_1 \geq i_0$ such that 
$$i \geq i_1 \implies z_i \in B_1(\xi)\;,\; \frac{r r_i}{2} \leq R \mbox{ and } (M+1)r_i \leq R.$$

We get: \begin{align*}
\mu_{\xi_i, r_i}(B_r(x)) & = \frac{\mu(B_{rr_i}(\xi_i+r_i x))}{\mu(B_{r_i}(\xi_i))}, \\ & \geq \frac{\mu(B_{rr_i - r_i|x-x_i|}(z_i)}{\mu(B_{r_i}(\xi_i))}, \\& \geq \frac{\mu(B_{\frac{r r_i}{2}}(z_i) )}{\mu(B_{r_i}(\xi_i))}, \\& \geq \frac{\mu(B_{\frac{r r_i}{2}}(z_i) ))}{\mu(B_{r_i (M+1)}(z_i) ))}, \\& \geq \frac{1}{2} f_{\mu}(z_i,\frac{r}{2(M+1)}).
\end{align*}

Therefore \begin{align*}
\nu(B_{2r}(x)) & \geq \nu(\overline{B}_r(x)), \\ &  \geq \limsup\mu_{\xi_i, r_i}(B_r(x)) , \\ & \geq \limsup\frac{1}{2} f_{\mu}(z_i,\frac{r}{2(M+1)}), \\& \geq \frac{1}{2} f_{\mu}(\xi,\frac{r}{2(M+1)}), \\ & >0.
\end{align*}

To prove the converse, suppose that $x \in spt(\nu)$ and that there exists a subsequence $i_k$ such that $$d(x, \frac{\Sigma-\xi_{i_k}}{r_{i_k}})\geq \epsilon_0.$$
Then $$B_{\frac{\epsilon_0}{2}}(x) \cap \frac{\Sigma-\xi_{i_k}}{r_{i_k}} = \emptyset.$$

Take $\phi \in C_{c}(B_{\frac{\epsilon_0}{2}}(x))$. Then

$$ \int \phi d\nu= \lim \frac{1}{\mu(B_{r_{i_k}}(\xi_{i_k}))} \int \phi( \frac{y-\xi_{i_k}}{r_{i_k}})d\mu(y) = 0.$$
This contradicts $x \in spt(\nu)$.
\end{proof}
We restate Theorem $\ref{pseudotangentUAD}$ before proving it.
\begin{theorem} 
Let $\mu$ be a uniformly asymptotically doubling measure in $\RR^{d}$. Then all pseudo-tangents of $\mu$  are uniform. More precisely, if $\xi \in supp(\mu)$, and $\nu$ is a pseudo-tangent to $\mu$ at $\xi$, then for every $x \in supp(\nu)$, and every $r>0$ we have :
$$\nu(B_{r}(x))=f_{\mu}(\xi,r).$$
\end{theorem}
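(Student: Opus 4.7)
The plan is to realize $\nu$ as a normalized pseudo-tangent, $\mu_{\xi_i, r_i} \rightharpoonup \nu$, with $\xi_i \in \Sigma$, $\xi_i \to \xi$, $r_i \downarrow 0$, and compute $\nu(B_r(x))$ by weak convergence, using UAD to evaluate the ensuing $\mu$-mass ratios. Throughout, I use the extension $f_\mu(\xi, t) := 1/f_\mu(\xi, 1/t)$ for $t > 1$, forced by the identity $R(x, \rho, t) = 1/R(x, t\rho, 1/t)$ and UAD. The base case $x = 0 \in \mbox{supp}(\nu)$ is immediate: by UAD at $\xi_i$,
$$\mu_{\xi_i, r_i}(B_r(0)) = \frac{\mu(B_{r r_i}(\xi_i))}{\mu(B_{r_i}(\xi_i))} \;\longrightarrow\; f_\mu(\xi, r),$$
and weak convergence at continuity radii of $\nu$ at $0$, combined with left-continuity of $r \mapsto \nu(B_r(0))$ and continuity of $f_\mu(\xi, \cdot)$, gives $\nu(B_r(0)) = f_\mu(\xi, r)$ for all $r > 0$.

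For general $x \in \mbox{supp}(\nu)$, the preceding lemma supplies a sequence $x_i = (z_i - \xi_i)/r_i \to x$ with $z_i \in \Sigma$, so $z_i \to \xi$. Weak convergence together with the sandwich $B_{r - |x_i - x|}(x) \subset B_r(x_i) \subset B_{r + |x_i - x|}(x)$ (letting $|x_i - x| \downarrow 0$ through continuity radii of $\nu$ at $x$) yields for all but countably many $r > 0$
$$\nu(B_r(x)) = \lim_{i \to \infty} \mu_{\xi_i, r_i}(B_r(x_i)) = \lim_{i \to \infty} \frac{\mu(B_{r r_i}(z_i))}{\mu(B_{r_i}(\xi_i))}.$$
Splitting
$$\frac{\mu(B_{r r_i}(z_i))}{\mu(B_{r_i}(\xi_i))} = \frac{\mu(B_{r r_i}(z_i))}{\mu(B_{r_i}(z_i))} \cdot \frac{\mu(B_{r_i}(z_i))}{\mu(B_{r_i}(\xi_i))},$$
the first factor tends to $f_\mu(\xi, r)$ by UAD at $z_i$.

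The main obstacle is the second \emph{shift-of-center} factor, since $|z_i - \xi_i| = r_i |x_i| \sim r_i |x|$ is of the same order as $r_i$ and UAD does not apply directly to compare $\mu(B_{r_i}(z_i))$ with $\mu(B_{r_i}(\xi_i))$. I bridge with an intermediate large radius $R r_i$, $R > |x|$: the inclusions $B_{(R - |x_i|) r_i}(\xi_i) \subset B_{R r_i}(z_i) \subset B_{(R + |x_i|) r_i}(\xi_i)$ combined with UAD at $\xi_i$ squeeze $\mu(B_{R r_i}(z_i))/\mu(B_{R r_i}(\xi_i))$ between $f_\mu(\xi, 1 - |x|/R)$ and $f_\mu(\xi, 1 + |x|/R)$ as $i \to \infty$. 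Simultaneously, UAD at both $z_i$ and $\xi_i$ gives $\mu(B_{r_i}(\bullet))/\mu(B_{R r_i}(\bullet)) \to f_\mu(\xi, 1/R)$ for either center, so the ratio at radius $r_i$ inherits the subsequential limits of the ratio at radius $R r_i$. Letting $R \to \infty$ and using continuity of $f_\mu(\xi, \cdot)$ at $t = 1$ forces the second factor to $1$. Combining with the first factor, $\nu(B_r(x)) = f_\mu(\xi, r)$ on a dense set of $r$; left-continuity of $\nu(B_r(x))$ and continuity of $f_\mu$ then extend the identity to all $r > 0$.
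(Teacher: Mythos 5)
Your proposal is correct and takes essentially the same route as the paper: both reduce the blown-up ball mass $\mu_{\xi_i,r_i}(B_r(\cdot))$ to a product of a UAD doubling ratio at the nearby support point $z_i$ and a shift-of-center ratio, and both handle the latter by comparing the two centers at a much larger scale ($Rr_i$ in your write-up, $\kappa(1-\epsilon) r r_i$ in the paper) where UAD forces that ratio to $1$. Your bookkeeping (continuity radii, two-sided sandwiches, and the explicit extension $f_\mu(\xi,t)=1/f_\mu(\xi,1/t)$ for $t>1$) is somewhat more careful than the paper's one-sided $\epsilon$-inequalities, but the underlying argument is the same.
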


\begin{proof}
Suppose $\xi \in \Sigma$, $\nu$ a pseudo-tangent to $\mu$ at $\xi$.

We estimate $\nu(B_r(x))$ for $x \in spt(\nu)$, $r>0$. Fix $\epsilon>0$.

Let $\xi_i$ be a sequence of points in $\Sigma$ such that $\xi_i \to \xi$ and $r_i$ a sequence of positive radii decreasing to $0$ chosen so that $$\mu_{\xi_i, r_i} \rightharpoonup \nu.$$ Let $x_i$ a sequence of points in $\frac{\Sigma-\xi_i}{r_i}$ converging to $x$ and let $z_i \in \Sigma$ be such that $z_i=r_i x_i + \xi_i$.

Choose $i_0$ so that: 
$$i \geq i_0 \implies |x-x_i|<\min(1,\epsilon r) \; ; \;  |z_i-\xi_i|=r_i|x_i| \leq M r_i \; \mbox{ where } M=|x|+1.$$

Then: \begin{align*}
\mu_{\xi_i, r_i}(B_r(x)) &= \frac{\mu(B_{rr_i}(\xi_i+r_ix))}{\mu(B_{r_i}(\xi_i))},\\ & \geq \frac{\mu(B_{(1-\epsilon)r_i r}(z_i))}{\mu(B_{r_i}(\xi_i))}, \\ &= \frac{\mu(B_{(1-\epsilon)r_i r}(z_i)) }{\mu(B_{(1-\epsilon)r_i r}(\xi_i))} . \frac{\mu(B_{(1-\epsilon)r_i r}(\xi_i))}{\mu(B_{r_i}(\xi_i)}.
\end{align*}

Choosing $i$ large enough, we have on one hand \begin{equation} \label{ineq1} \frac{\mu(B_{(1-\epsilon)r_i r}(\xi_i))}{\mu(B_{r_i}(\xi_i)} \geq (1- \epsilon) f_{\mu}(\xi_i, (1-\epsilon)r),
\end{equation}

and on the other hand, for $\kappa>1$, we have:

\begin{align}\label{ineq2}
\frac{\mu(B_{(1-\epsilon)r_i} r (z_i))}{\mu(B_{(1-\epsilon)r_i r} (\xi_i))} & \geq (1-\epsilon)^2 \frac{\mu(B_{\kappa(1-\epsilon)r_i r} (z_i))}{\mu(B_{\kappa(1-\epsilon)r_i r} (\xi_i))}, \nonumber \\ & \geq (1-\epsilon)^2 \frac{\mu(B_{\kappa(1-\epsilon)r_i r - Mr_i} (\xi_i))}{\mu(B_{\kappa(1-\epsilon)r_i r} (\xi_i))}, \\ & \geq (1-\epsilon)^{3} f_{\mu}(\xi_i, 1-\epsilon-\frac{M}{\kappa r}). \nonumber
\end{align}
Let $\kappa_{\epsilon}$ be chosen so that $\frac{M}{\kappa_{\epsilon} r}< \epsilon$.

Putting $\ref{ineq1}$ and $\ref{ineq2}$ together, we get:

\begin{equation}\label{ineq3}
\mu_{\xi_i,r_i}(B_r(x)) \geq (1-\epsilon)^{4} f_{\mu}(\xi_i, (1-\epsilon)r) f_{\mu}(\xi_i, 1-\epsilon-\frac{M}{\kappa_{\epsilon} r} ).
\end{equation}

Letting $i \to \infty$, we get 
\begin{align*}
 \liminf \mu_{\xi_i,r_i}(B_r(x)) & \geq   \liminf  f_{\mu}(\xi_i,(1-\epsilon)r) \; . \; \liminf  f_{\mu}(\xi_i, 1-\epsilon-\frac{M}{\kappa r}), \\
 &=  f_{\mu}(\xi, (1-\epsilon)r) \; . \; \liminf  f_{\mu}(\xi, 1-\epsilon-\frac{M}{\kappa r}).
\end{align*}

Letting $\epsilon$ go to $0$, we get:

$$\liminf \mu_{\xi_i,r_i}(B_r(x)) \leq f_{\mu}(\xi, r).$$

A similar calculation gives: $$\limsup \mu_{\xi_i,r_i}(B_r(x))  \leq f_{\mu}(\xi,r).$$

On one hand we get $$ \nu(B_r(x)) \leq \liminf \mu_{\xi_i,r_i}(B_r(x))  \leq  f_{\mu}(\xi,r)$$. 

On the other hand, for $\delta>0$ chosen arbitrarily we get:

$$\nu(B_r(x)) \geq \nu(\overline{B}_{(1-\delta)r}(x)) \geq \limsup \mu_{\xi_i,r_i}(\overline{B}_{(1-\delta)r}(x))  \geq  f_{\mu}(\xi,(1-\delta)r).$$

Letting $\delta$ go to $0$, we obtain:

$$\nu(B_r(x))=f_{\mu}(\xi,r).$$

\end{proof}









\begin{corollary}
Let $\mu$ be a Uniformly Asymptotically Doubling measure and $f$ be its distribution function. Then for every $x$ there exists $n=n_{x}$ such that: $$\lim_{t \to 0} \frac{f(x,t)}{t^n}=f(x),$$ where $f(x) \in (0,\infty)$. We write $n_x= \dim f(x,.)$.
\end{corollary}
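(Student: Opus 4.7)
The plan is to reduce this to Preiss' structure theorem (Theorem $\ref{dim0}$) via the pseudo-tangent identification in Theorem $\ref{pseudotangentUAD}$. The key observation is that the function $r \mapsto f_\mu(x,r)$ must literally be the distribution function of an honest uniform measure on $\RR^d$, at which point Preiss' theorem hands us the integer $n_x$ and the positive limit for free.

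First I would produce a pseudo-tangent of $\mu$ at $x$ whose distribution function is $f_\mu(x,\cdot)$. Fix $x \in \supp(\mu)$ and any sequence $r_i \downarrow 0$, and take the constant sequence $\xi_i = x$. Since $\mu$ is doubling (built into the UAD hypothesis), the rescaled measures $\mu_{x,r_i}$ have $\mu_{x,r_i}(B_R(0))$ uniformly bounded in $i$ for each fixed $R>0$, so Theorem $\ref{weakconv}$ supplies a subsequence converging weakly to some Radon measure $\nu$. A quick lower bound from UAD applied at, say, $t=1/2$ gives
$$\nu(B_1(0)) \geq \tfrac{1}{2} f_\mu(x, \tfrac12) > 0,$$
so $\nu$ is a genuine (pseudo-)tangent at $x$. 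Theorem $\ref{pseudotangentUAD}$ then asserts that $\nu$ is uniform with
$$\nu(B_r(y)) = f_\mu(x, r) \quad \text{for every } y \in \supp(\nu),\ r > 0,$$
i.e.\ the distribution function $\phi_\nu$ of $\nu$ in the sense of Preiss is precisely $r \mapsto f_\mu(x,r)$.

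Second, I would feed $\nu$ into Preiss' Theorem $\ref{dim0}$. That theorem produces an integer $n$ such that $\lim_{t \to 0} \phi_\nu(t)/t^n$ exists and lies in $(0,\infty)$. Translating back via $\phi_\nu(t) = f_\mu(x,t)$, we obtain
$$\lim_{t \to 0} \frac{f_\mu(x,t)}{t^n} = f(x) \in (0,\infty).$$
Setting $n_x := n$ finishes the proof.

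There is no genuine obstacle here: the whole content of the corollary is the bridge from UAD measures to uniform measures, which has already been built in Theorem $\ref{pseudotangentUAD}$. The only small care needed is the non-vanishing of the pseudo-tangent, which, as noted, follows immediately from the UAD definition together with $f_\mu(x,1)=1$ and continuity of $f_\mu$ in $t$.
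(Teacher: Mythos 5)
Your proposal is correct and takes essentially the same route as the paper: the paper's proof is precisely the observation that $f_\mu(x,\cdot)$ is the distribution function of a uniform (pseudo-)tangent measure at $x$, furnished by Theorem \ref{pseudotangentUAD}, combined with Preiss' Theorem \ref{dim0}. The only cosmetic point is that the nonvanishing of the limit measure is cleanest via upper semicontinuity on compact sets, e.g. $\nu(B_1(0)) \geq \nu(\overline{B}_{1/2}(0)) \geq \limsup_i \mu_{x,r_i}(\overline{B}_{1/2}(0)) > 0$, rather than a lower bound stated directly on the open ball $B_1(0)$.
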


\begin{proof}
This is a direct consequence of the fact that for fixed $x$, $f(x,t)$ is the distribution function of a uniform measure and of Theorem $\ref{dim0}$.
\end{proof}

\begin{definition}
Let $\mu$ be a UAD measure in $\RR^{d}$ with distribution function $f_{\mu}(x,t)$. Let $n=\sup \left\lbrace \dim f(x,.) \; ; \; x \in supp(\mu) \right\rbrace $(Note that $n \leq d$). We say $\mu$ is $n$-UAD.
\end{definition}
\newpage

\section{Singularities of UAD measures}

We first aim to prove Theorem $\ref{accumsing}$. We start by proving an analogue of Lemma [1.5] from $\cite{N1}$ stating a ``connectedness'' result for pseudo-blow ups along the same sequence of points.
\begin{lemma} \label{connectedness}
Let $\mu$ be a uniformly asymptotically doubling measure with $\left\lbrace x_k \right\rbrace_{k=1}^{\infty} \subset supp(\mu) \cap \overline{B}_1(0)$, $x_k \to x$. Moreover, let $\left\lbrace \tau_k \right\rbrace$, $ \left\lbrace \sigma_k \right\rbrace $ sequences of positive numbers going to zero. We also assume that $\sigma_{k} < \tau_{k}$ and there exist uniform measures $\alpha$ and $\beta$ such that: 
$$\mu_{x_k, \tau_k} \rightharpoonup \alpha \mbox{ and } \mu_{x_k, \sigma_k} \rightharpoonup \beta. $$
Then 
$$ \alpha \mbox{ is flat } \implies \beta \mbox{ is flat.} $$
\end{lemma}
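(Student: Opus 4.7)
The plan is to argue by contradiction: assume $\beta$ is not flat and produce a uniform pseudo-tangent $\gamma$ of $\mu$ at $x$ which violates Corollary \ref{functionalflatness} by simultaneously satisfying $F(\gamma)\geq \epsilon_0/2$ and $\limsup_{R\to\infty} F(\gamma_{0,R})\leq \epsilon_0/2$. By Theorem \ref{pseudotangentUAD} and the corollary after it, $\alpha$ and $\beta$ are both uniform with distribution function $f_\mu(x,\cdot)$, so they share the same $\dim_0$; the case $\dim_0\leq 2$ is trivial since by \cite{P} all such uniform measures are flat, and I may assume $\dim_0\geq 3$ throughout, so that the threshold $\epsilon_0$ of Corollary \ref{functionalflatness} is available.

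A preliminary reduction handles the case $\sigma_k/\tau_k\not\to 0$: along a subsequence with $\sigma_k/\tau_k\to c\in(0,1]$, the UAD ratio $\mu(B_{\sigma_k}(x_k))/\mu(B_{\tau_k}(x_k))\to f_\mu(x,c)>0$ (by continuity of $f_\mu$) combined with the identity $T_{x_k,\sigma_k}[\mu]=T_{0,\sigma_k/\tau_k}[T_{x_k,\tau_k}[\mu]]$ and $\mu_{x_k,\tau_k}\rightharpoonup\alpha$ force $\beta=f_\mu(x,c)^{-1}T_{0,c}[\alpha]$, a positive rescaling of a flat measure, hence flat. So I may assume $\sigma_k/\tau_k\to 0$.

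The heart of the argument is scale selection. Since $\beta$ is uniform and not flat, Preiss's Theorem \ref{Preiss} together with Corollary \ref{functionalflatness} yield $R_0\geq 1$ with $F(\beta_{0,R})>\epsilon_0$ for all $R\geq R_0$, while flatness of $\alpha$ gives $F(T_{0,\rho}[\alpha])=0$ for every $\rho>0$. The scaling identity $(\mu_{x_k,r})_{0,R}=\mu_{x_k,Rr}$ and continuity of $F$ (Lemma \ref{contfunctional}) translate these to $F(\mu_{x_k,R_0\sigma_k})>\epsilon_0$ and $F(\mu_{x_k,\rho\tau_k})\to 0$ for every fixed $\rho>0$. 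Working on the dyadic ladder $r_{k,j}:=2^j R_0\sigma_k$, $0\leq j\leq N_k:=\lfloor \log_2(\tau_k/(R_0\sigma_k))\rfloor$ (with $N_k\to\infty$ because $\sigma_k/\tau_k\to 0$), let $j_k$ be the largest index with $F(\mu_{x_k,r_{k,j_k}})\geq \epsilon_0/2$ and set $\rho_k:=r_{k,j_k}$. Then $F(\mu_{x_k,\rho_k})\geq \epsilon_0/2$, while $F(\mu_{x_k,2^m\rho_k})<\epsilon_0/2$ for every $1\leq m\leq N_k-j_k$.

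The main technical obstacle is forcing $N_k-j_k\to\infty$ along a further subsequence: if instead it stayed bounded, $\rho_k/\tau_k$ would remain bounded below by a positive constant, and the reduction step above would produce $F(\mu_{x_k,\rho_k})\to 0$, contradicting the lower bound $\epsilon_0/2$. Once this separation is granted, UAD-based local mass bounds on $\mu_{x_k,\rho_k}$ let Theorem \ref{weakconv} extract a weak limit $\gamma$ along a subsequence; since $\rho_k\to 0$ and $x_k\to x$, $\gamma$ is a pseudo-tangent of $\mu$ at $x$, hence uniform by Theorem \ref{pseudotangentUAD}. Continuity of $F$ then yields $F(\gamma)\geq \epsilon_0/2$ and $F(\gamma_{0,2^m})\leq \epsilon_0/2$ for every $m\geq 1$; Theorem \ref{Preiss} applied to $\gamma$ gives $\gamma_{0,2^m}\to\gamma_\infty$ in $\mathcal F$, so $\limsup_{R\to\infty}F(\gamma_{0,R})=F(\gamma_\infty)\leq \epsilon_0/2<\epsilon_0$. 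Corollary \ref{functionalflatness} then forces $\gamma$ to be flat, i.e., $F(\gamma)=0$, contradicting $F(\gamma)\geq \epsilon_0/2>0$. Hence $\beta$ must be flat.
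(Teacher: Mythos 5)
Your proof is correct and follows essentially the same route as the paper's: it pins the flatness functional at an intermediate scale between $\sigma_k$ and $\tau_k$ (after ruling out $\sigma_k/\tau_k\not\to 0$ by the same rescaling-plus-UAD-ratio argument), extracts a uniform pseudo-tangent that is quantitatively non-flat yet flat at infinity, and contradicts Corollary \ref{functionalflatness}. The only real difference is the scale selection, which is a minor technical variant: you use a dyadic ladder with a maximal index (thresholds $\epsilon_0/2$), whereas the paper uses continuity of $r\mapsto F(\mu_{x_k,r})$ to pick a last crossing radius $\delta_k$ with $F(\mu_{x_k,\delta_k})=\kappa$ and $F\le\kappa$ on $[\delta_k,\tau_k]$.
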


\begin{proof}
Assume that $\alpha$ is flat but $\beta$ is not. Then $F(\alpha)=0$ and there exists $R_0>0$ such that :$$r \geq R_0 \implies F(\beta_{0,r})>\epsilon_{0}.$$
By continuity of $F$, there exists $0<\kappa<\epsilon_0$ and $k_0$ so that: $$k>k_0 \implies F(\mu_{x_k, \tau_k})< \kappa \; \mbox{ and} \; F(\mu_{x_k, R_0\sigma_k})>\kappa.$$

We claim that we can assume without loss of generality that $R_0 \sigma_{k} < \tau_{k}$. In fact we prove that $lim_{k \to \infty} \frac{\tau_k}{\sigma_{k}}= \infty$.

Indeed, assume by contradiction and passing to a subsequence that $\frac{\tau_k}{\sigma_k} \to \gamma>1$. Let $\gamma_k=\frac{\tau_k}{\sigma_k}$.
Then, on one hand, 
\begin{align*}
(\mu(B_{\sigma_k}(x_k)))^{-1} T_{x_k, \tau_k}[ \mu] &= (\mu(B_{\sigma_k}(x_k)))^{-1} T_{x_k, \gamma_k \sigma_k} [ \mu], \\& \rightharpoonup T_{0,\gamma}[ \beta].
\end{align*}

On the other hand, for $k$ large enough,
\begin{align*}
\frac{\mu(B_{\tau_k}(x_k))}{\mu(B_{\sigma_k}(x_k))}&=\frac{\mu(B_{\gamma_k \sigma_k}(x_k))}{\mu(B_{\sigma_k }(x_k))} \\  &\leq 2 f(x_k, \gamma_k)^{-1}
\end{align*}

so that 
\begin{align*}
\limsup_{k} \frac{\mu(B_{\tau_k}(x_k))}{\mu(B_{\sigma_k}(x_k))} &\leq 2 \limsup_k f(x_k, \gamma_k)^{-1} \\ & = 2 f(x, \gamma).
\end{align*}

Passing to a subsequence we get:

$$(\mu(B_{\sigma_k}(x_k)))^{-1} T_{x_k, \tau_k} [ \mu ]= \frac{\mu(B_{\tau_k}(x_k))}{\mu(B_{\sigma_k}(x_k))} d\mu_{x_k, \tau_k} \rightharpoonup c \alpha,$$
for some constant $c$.

However $\beta$ is not flat and $\alpha$ is flat thus yielding a contradiction.

Now define $f_k:(0,\infty) \to (0,\infty)$ to be $$f_k(r)= F(\mu_{x_k,r}).$$
$f_k$ is continuous away from zero for every $k>0$.
In particular, for every $k>k_0$, there exists $\delta_k \in [R_0 \sigma_k,\tau_{k}]$ so that $$F(\mu_{x_k, \delta_k})=\kappa \; \mbox{ and } F(\mu_{x_k,r}) \leq \kappa \mbox{ for } r \in [\delta_k, \tau_k].$$
Without loss of generality, by passing to a subsequence, 
$$\mu_{x_k,\delta_k} \to \xi.$$
Moreover, since $\xi$ is a pseudo-tangent to $\mu$ at $x$, it is in particular uniform.
By continuity of $F$, $F(\xi)=\kappa$. In particular, $\xi$ is not flat. We now show that $\xi$ is flat at infinity, obtaining hence a contradiction.

In the same way that we proved that $\frac{\tau_k}{\sigma_k} \to \infty$, we can prove that $\frac{\tau_k}{\delta_k} \to \infty$.
Now fix $R>1$. Since $\frac{\tau_k}{\delta_k} \to \infty$, there exists $k_1>k_0$ such that :
$$k>k_1 \implies R\delta_k \in [\delta_k, \tau_k].$$
In particular, if $k>k_1$, $F(\mu_{x_k,R\delta_k}) \leq \kappa$.
We deduce that $$\limsup F(\mu_{x_k,R\delta_k}) \leq \kappa.$$

Using the fact that $\frac{(\mu(B_{\delta_k}(x_k))}{\mu(B_{R\delta_k}(x_k))} \to f(x,\frac{1}{R})^{-1}=c_R$ , we have that for all $s>0$,
$$\lim_{k \to \infty} F_{s} ( \mu_{x_k, R\delta_k}, c_R T_{0,R} [ \xi])=0,$$
and hence, $$ \mu_{x_k, R\delta_k}\to c_R T_{0,R} [ \xi].$$

It follows easily from the fact that $ \mu_{x_k, R\delta_k}(B_1(0))=1$ that $c_R T_{0,R} [ \xi] = \xi_{0,R}$.

Thus, we have $$\mu_{x_k,R\delta_k} \rightharpoonup \xi_{0,R},$$

and consequently, $F(\xi_{0,R}) \leq \kappa$ for all $R>1$.

Now letting $R \to \infty$, we get $F(\Psi) \leq \kappa < \epsilon_0$, where $\Psi$ is the tangent to $\xi$ at $\infty$. This implies that $\Psi$ is flat and consequently, $\xi$ is flat contradicting the fact that $F(\xi)=\kappa$.

\end{proof}
Let us restate Theorem $\ref{accumsing}$ before proving it.
\begin{theorem} 
Let $\mu$ be a UAD measure.
Let $\left\lbrace x_j \right\rbrace_{j=0}^{\infty} \subset \mathcal{S}_{\mu}$, $x_j \to x_0$, $\nu \in Tan(\mu,x_0)$, $\left\lbrace r_j \right\rbrace_{j}$ a sequence going to zero such that $\mu_{x_0,r_j} \rightharpoonup \nu$. Moreover, let $y_j= \frac{x_j-x_0}{r_j} \in B_1(0)$, $y_j \to y$.
Then $y \in \mathcal{S}_{\nu}$.
\end{theorem}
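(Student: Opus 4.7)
Suppose for contradiction that $y \in \mathcal{R}_\nu$. As a tangent at $x_0$, $\nu$ is in particular a pseudo-tangent of $\mu$ at $x_0$, so Theorem~\ref{pseudotangentUAD} gives that $\nu$ is uniform; write $n = \dim_0 \nu$ and assume $n \geq 3$ (for $n \leq 2$ every uniform measure is flat and the statement becomes vacuous under the hypothesis $x_j \in \mathcal{S}_\mu$). Theorem~\ref{Preiss} produces a unique conical $n$-uniform tangent $\lambda_y$ of $\nu$ at $y$, and the flatness of $y$ forces $\lambda_y = c\mathcal{H}^n \res V$ for some $n$-plane $V$. In particular $\mathcal{F}(\nu_{y, \rho}, \lambda_y) \to 0$ as $\rho \downarrow 0$ and $F(\lambda_y) = 0$, so Lemma~\ref{contfunctional} yields $F(\nu_{y, \rho}) \to 0$.

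The first key step is to transfer this smallness to $\mu$: for every continuity radius $\rho$ of $\nu$ at $y$ I claim $\mu_{x_j, r_j \rho} \rightharpoonup \nu_{y, \rho}$ as $j \to \infty$. This follows from the dilation identity
\[
T_{y_j, \rho}\bigl[\mu_{x_0, r_j}\bigr] = \frac{\mu\bigl(B_{r_j \rho}(x_j)\bigr)}{\mu\bigl(B_{r_j}(x_0)\bigr)}\, \mu_{x_j, r_j \rho},
\]
together with $T_{y_j, \rho}[\mu_{x_0, r_j}] \rightharpoonup T_{y, \rho}[\nu]$ (using $\mu_{x_0, r_j} \rightharpoonup \nu$ and $y_j \to y$) and the fact that the scalar ratio $\mu_{x_0, r_j}(B_\rho(y_j))$ converges to $\nu(B_\rho(y)) = f_\mu(x_0, \rho) > 0$, the positivity coming from $y \in \mbox{supp}\,\nu$ by the first lemma of Section~3. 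Lemma~\ref{contfunctional} then gives $F(\mu_{x_j, r_j \rho}) \to F(\nu_{y, \rho})$ as $j \to \infty$. A diagonal choice of $\rho_k \downarrow 0$ through continuity radii, together with $j_k \to \infty$ growing fast enough, produces scales $\tau_k := r_{j_k} \rho_k \downarrow 0$ with $\mu_{x_{j_k}, \tau_k} \rightharpoonup \lambda_y$, an \emph{actually} flat measure.

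The second key step produces an opposing non-flat witness at each $x_{j_k}$. Since $x_{j_k} \in \mathcal{S}_\mu$, there is a non-flat tangent $\eta_k$ to $\mu$ at $x_{j_k}$, which is uniform by Theorem~\ref{pseudotangentUAD}; its tangent at infinity $\Lambda_k$ is conical uniform (Theorem~\ref{Preiss}) and non-flat (Theorem~\ref{flatnessinfty}), so $F(\Lambda_k) > \epsilon_0$ by Corollary~\ref{functionalflatness}. Using the identity $T_{0, R}[\mu_{x, r}] = \tfrac{\mu(B_{rR}(x))}{\mu(B_r(x))}\mu_{x, rR}$, a standard diagonal realizes $\Lambda_k$ as a weak limit of $\mu_{x_{j_k}, s}$ along some $s = s_l^{(k)} \downarrow 0$, and Lemma~\ref{contfunctional} lets me select $\sigma_k = s_{l(k)}^{(k)}$ satisfying $\sigma_k < \tau_k$ and $F(\mu_{x_{j_k}, \sigma_k}) > \epsilon_0/2$.

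To close, by Theorem~\ref{weakconv} I pass to a further subsequence along which $\mu_{x_{j_k}, \sigma_k} \rightharpoonup \beta$; since $x_{j_k} \to x_0$ and $\sigma_k \downarrow 0$, $\beta$ is a pseudo-tangent of $\mu$ at $x_0$ and so uniform. Lemma~\ref{contfunctional} yields $F(\beta) \geq \epsilon_0/2 > 0$, so $\beta$ is not flat. But Lemma~\ref{connectedness}, applied to the varying centers $x_{j_k} \to x_0$ with scales $\sigma_k < \tau_k$ and the uniform limits $\alpha = \lambda_y$ (flat) and $\beta$, forces $\beta$ to be flat --- a contradiction. Hence $y \in \mathcal{S}_\nu$. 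The delicate point throughout is securing \emph{genuine} flatness of $\alpha$, not merely $F$-smallness, so that Lemma~\ref{connectedness} is applicable; this is exactly what the diagonal in the second paragraph is for, and once it is in place the remaining steps are routine.
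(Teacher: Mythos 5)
Your argument is essentially the paper's: you transfer the blow-up of $\nu$ at $y$ to blow-ups of $\mu$ centered at the singular points $x_{j_k}$ at scales $\tau_k$, produce smaller scales $\sigma_k<\tau_k$ at the \emph{same} centers where $F$ stays above $\epsilon_0/2$ (using that tangents at singular points are curved at infinity, via Corollary \ref{functionalflatness}), and conclude with Lemma \ref{connectedness} --- the paper performs the same two-scale construction, only arranged directly (a non-flat limit $\nu^{\infty}$ along the small scales forces the tangent of $\nu$ at $y$ to be non-flat) rather than by contradiction, and with the scales interleaved in the opposite order. The one blemish is your parenthetical dismissal of $n\leq 2$: ``every uniform measure is flat'' is false for uniformly distributed measures (only $n$-uniform measures with $n\leq 2$ are necessarily flat), but since the paper's own proof is no more careful about that degenerate case, this does not affect the comparison.
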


\begin{proof}
We start by constructing a sequence $\sigma_k$ satisfying $\mu_{x_k, \sigma_k} \rightharpoonup \nu^{\infty}$, where $\nu^{\infty}$ satisfies $F(\nu_{0,R}^{\infty})>\epsilon_0$ for all $R>1$ and $\frac{\sigma_j}{r_j} \to 0$ as $j \to \infty$.

First let $\left\lbrace s_j^k \right\rbrace$ be subsequences of $r_j$ such that $\left\lbrace s_{j}^{k+1} \right\rbrace$ subsequence of $\left\lbrace s_{j}^{k} \right\rbrace$, and $\mu_{x_k, s_j^k} \rightharpoonup \nu^k$ where $\nu^{k} \in Tan(\mu,x_k)$. Moreover, for every $y \in supp(\nu^{k})$, $r>0$, we have :
$$\nu^{k}(B_r(y))=f_{\mu}(x_k,r).$$

Since $x_k$ are singular points, $\nu^{k}$ is not flat. In particular, it is curved at $\infty$, i.e. there exist $R_k$ such that whenever $r>R_k$, $F(\nu^{k}_{0,r})> \epsilon_{0}$. 

Now for all $k$, choose $s_{j_k}^{k}$ such that: $R_k s_{j_k}^{k} 
\leq r_k^2$, and $\mathcal{F}( \mu_{x_k, R_k s_{j_k}^{k}}, \nu_{0,R_k}^k)< \frac{1}{2^k}$. By passing to a subsequence if necessary, we can assume that $\nu^{k}_{0,R_k} \rightharpoonup \nu^{\infty}$ for some Radon measure $\nu^{\infty}$. Indeed, this follows from the fact that for $S>1$ fixed and for $k$ large enough, we have, by Theorem $\ref{doubling}$:
$$ \frac{\nu^{k}(B_{R_kS}(0))}{\nu^{k}(B_{R_k}(0))} \leq 5^{d} S^{d},$$
from which convergence of a subsequence follows by compactness.
 
Moreover, $\mu_{x_k, R_k s_{j_k}^{k}} \to \nu^{\infty}$ since we have $\mathcal{F}( \mu_{x_k, R_k s_{j_k}^{k}}, \nu_{0,R_k}^k)< \frac{1}{2^k}$ and $\mathcal{F}(\nu_{0,R_k}^k, \nu^{\infty})$ going to $0$ as $k$ goes to $\infty$. In particular, $\nu^{\infty}$  is a pseudo-tangent to $\mu$ at $x_0$.

We claim that for all but at most countably many $R \in (1,\infty)$, $F(\nu^{\infty}_{0,R})> \frac{\epsilon_0}{2}$.
Let $V$ be an $n$-plane such that $F(\nu_{0,R}^{\infty})= \int \phi(z) dist(z,V)^{2} d\nu_{0,R}^{\infty}(z)$. Suppose for contradiction that $F(\nu_{0,R}^{\infty}) < \frac{\epsilon_{0}}{2}$ for some $R>1$, $R \notin S_0$.

\begin{align*}
F(\nu^{k}_{0,RR_k}) & \leq \int \phi(z) dist(z,V)^{2} d\nu^{k}_{0,RR_k},\\ & \leq \int \phi (z) dist(z,V)^{2} d\nu^{k}_{0,RR_k} -\int \phi(z) dist(z,V)^{2} d\nu^{\infty}_{0,R}+ F(\nu^{\infty}_{0,R}), \\ & \leq F_1(\nu^{\infty}_{0,R},\nu^{k}_{0,RR_k})+\frac{\epsilon_0}{2}
\end{align*} 

To prove that the right hand side goes to $0$ as $k$ goes to $\infty$, let $g \in \mathcal{L}(1)$. Then, if we define  $g_R(y)= R g (\frac{y}{R})$, we have:

\begin{align*}
\left|\int g d\nu^{k}_{0,RR_k} - \int g d\nu^{\infty}_{0,R} \right|& = \left| \frac{\nu^{k}_{0,R_k}(B_{1}(0))}{R\nu^{k}_{0,R_k}(B_{R}(0))} \int g_R d\nu^{k}_{0,R_k}- \frac{1}{R\nu^{\infty}(B_R(0)} \int g_R d\nu^{\infty} \right|, \\ & \leq \frac{\nu^{k}_{0,R_k}(B_{1}(0))}{R\nu^{k}_{0,R_k}(B_{R}(0))} \left| \int g_R d\nu^{k}_{0,R_k}- \int g_R d\nu^{\infty} \right| \\ &+\left|\frac{\nu^{k}_{0,R_k}(B_{1}(0))}{R\nu^{k}_{0,R_k}(B_{R}(0))} - \frac{1}{R\nu^{\infty}(B_R(0))} \right|\int g_R d\nu^{\infty}, \\ & \leq \frac{1}{R\nu^{k}_{0,R_k}(B_{R}(0))} F_R(\nu^{k}_{0,R_k}, \nu^{\infty}) \\ &+\left|\frac{1}{\nu^{k}_{0,R_k}(B_{R}(0))} - \frac{1}{\nu^{\infty}(B_R(0))}\right| 2.\nu^{\infty}(B_R(0)).
\end{align*}
However since $\nu^{k}_{0,R_k} \rightharpoonup \nu^{\infty}$, we have $\nu^{k}_{0,R_k}(B_R(0)) \to \nu^{\infty}(B_R(0))$ for all but countably many $R>1$ and $F_R(\nu^{k}_{0,R_k}, \nu^{\infty}) \to 0$.

Therefore, choosing $k$ large enough, we get $F(\nu^{k}_{0,RR_k}) \leq \epsilon_0$
This contradicts the definition of $R_k$. We therefore have $F(\nu_{0,R}^{\infty})> \frac{\epsilon_0}{2}$ for all but at most countably many $R>1$.
Letting $R$ to $\infty$, we get that $\nu^{\infty}$ is curved at $\infty$. However since $\nu^{\infty}$ is a pseudo-tangent to $\mu$, it is in particular uniform which implies that it is not flat.
Letting $\sigma_k=R_k s_{j_k}^{k}$, our claim is proved.

On the other hand, we claim that 
\begin{equation} \label{nuy}
y \in supp(\nu) \mbox{ and } \mu_{x_j , r_j} \rightharpoonup \nu_{y}. 
\end{equation} where  $\nu_{z}$ denotes $T_{z,1}[\nu]$ whenever $z \in supp(\nu)$. 
 Indeed, let $\delta > 0$.
Then:
\begin{align*}
\nu(B_{\delta}(y)) & \geq \limsup_{i \to \infty} \mu_{0,r_i}\left(_{B_{\frac{\delta}{4}}}\left(y\right)\right) \\ & =\limsup_{i \to \infty} \omega_{n}(\mu(B_{r_i}(0)))^{-1} \mu\left(B_{ \frac{r_i \delta}{4}}\left(r_i y\right)\right).
\end{align*}
But for $i$ large enough $|y-y_i| \leq \frac{\delta}{8}$ implying that $B_{r_i \frac{\delta}{8}}(x_{i}) \subset B_{ \frac{r_i \delta}{4}}(r_i y)$. Consequently, $$\nu(B_{\delta}(y)) >0$$ since $$(\mu(B_{r_i}(0)))^{-1} \mu\left(B_{r_i \frac{\delta}{8}}(x_{i})\right) = \frac{\delta^{n}}{8^n}.$$

Let us prove the second part of $\eqref{nuy}$.
Recall Definition $\ref{L(r)}$.

Fix $R>0$. Let $\phi \in \mathcal{L}(R)$. Then, on one hand, for $j$ large enough that $|y_j| \leq 2$, we have:
\begin{align}
\left| \int \phi(z) d\mu_{x_j , r_j}(z) - \int \phi(z) dT_{y_j ,1}[\nu](z) \right| 
& =\left| \int \phi( z-y_j) d\mu_{0,r_j}(z) - \int \phi(z-y_j) d\nu(z) \right|, \nonumber \\
&\leq F_{R+2} (\mu_{0,r_j}, \nu) , 
\end{align}
since $\phi_j(z)=\phi(z-y_j) \in \mathcal{L}(R+2)$ .
On the other hand,
\begin{align}
\left| \int \phi(z) dT_{y_j ,1}[\nu](z)- \int \phi(z) dT_{y,1}[\nu](z) \right| &= \left| \int \left(\phi(z-y_j) - \phi(z-y)\right) d\nu(z) \right| , \nonumber \\
&\leq |y-y_j| \nu(B_{R+2}(0)),
\end{align}
since $Lip(\phi) \leq 1$, $\phi_{j}$ and $\phi_{y}$ are supported in $B_{R+2}(0)$ where we define $\phi_{y}(z)=\phi(z-y)$.
This gives, taking the supremum over all $\phi \in \mathcal{L}(R)$:
$$ F_R(\mu_{x_j , r_j} , \nu_{y} ) \leq F_{R+2} (\mu_{0,r_j}, \nu)+ |y-y_j| \nu(B_{R+2}(0)), $$
for $j$ large enough.
Letting $j \to \infty$, we get $\eqref{nuy}$ since $R$ was chosen arbitrarily. 

Let $\rho_k= \frac{\sigma_k}{r_k}$. Using $\rho_k$, we construct a sequence $\tilde{\tau}_{k}$ such that:
$$\mu_{{x}_{l_k} , \tilde{\tau}_k} \rightharpoonup \alpha,$$
for some subsequence $x_{l_k}$ of $x_k$ where $\alpha$ is the normalized tangent to $\nu$ at $y$.

For every $k$ there exists $l_k>k$, $l_k > l_{k-1}$ such that whenever $l>l_k$ 
\begin{equation}\label{tau}
F_{1} (\mu_{x_l , r_l} , \nu_{y}) < \frac{1}{k} {\rho}_{k} \nu_{y}(B_{\rho_k}(0)) \mbox{ and } \rho_{l} < \rho_{k},
\end{equation} 
since $\mu_{x_l , r_l} \rightharpoonup \nu_{y}$ and $\rho_{k} \to 0$.
Let $\tilde{\tau_{k}} = r_{l_k} \rho_{k}$ and $\tilde{x}_k= x_{l_k}$.

We claim that \begin{equation}\label{alpha}
\mu_{\tilde{x}_k , \tilde{\tau}_k} \rightharpoonup \alpha.
\end{equation}
To prove the claim, fix $R>0$.
Then for $k$ large enough that $R\rho_{k} \leq 1$ $$
F_{R}(\mu_{\tilde{x}_k , \tilde{\tau}_k} , \nu_y(B_{\rho_k}(0))^{-1} T_{0,\rho_{k}} [\nu_{y}])< \frac{2}{k}.$$ 

Indeed, let $g \in \mathcal{L}(R)$.
Then
\begin{align}
&| \int g d\mu_{\tilde{x}_k, \tilde{\tau}_k}- \frac{1}{\nu_{y}(B_{\rho_k}(0))} \int g d(T_{0,\rho_k}[\nu_y])|,\\ &= | \frac{1}{\mu(B_{\tilde{\tau}_k}(\tilde{x}_k))} \int g \; d(T_{0, \rho_k}\circ T_{\tilde{x}_k, \tilde{r_{l_k}}_k}[\mu])-\frac{1}{\nu_{y}(B_{\rho_k}(0))} \int g d(T_{0,\rho_k}[\nu_y])|, \\
& = | \frac{\mu(B_{r_{l_k}}(\tilde{x}_k))}{\mu(B_{\tilde{\tau}_k}(\tilde{x}_k))} \; \int g d(T_{0,\rho_k}[\mu_{\tilde{x}_k, r_{l_k}}])-\frac{1}{\nu_{y}(B_{\rho_k}(0))} \int g d(T_{0,\rho_k}[\nu_y])|, \\ &\leq |\frac{\mu(B_{r_{l_k}}(\tilde{x}_k))}{\mu(B_{\tilde{\tau}_k}(\tilde{x}_k))} - \frac{1}{\nu_{y}(B_{\rho_k}(0))} | \int g d(T_{0,\rho_k}[ \mu_{\tilde{x}_k, r_{l_k}}]) \\& \; \; \; \; \; \; \; +\frac{1}{\nu_{y}(B_{\rho_k}(0))} \; | \int g d(T_{0,\rho_k}[\nu_{y}])- \int g d(T_{0,\rho_k} [\mu_{\tilde{x}_k, r_{l_k}}])| \nonumber
\end{align}

On one hand, by Lemma $\ref{pseudotangentUAD}$, we have 
$$\frac{\mu(B_{r_{l_k}}(\tilde{x}_k))}{\mu(B_{\tilde{\tau}_k}(\tilde{x}_k))} - \frac{1}{\nu_{y}(B_{\rho_k}(0))} = \frac{\mu(B_{r_{l_k}}(\tilde{x}_k))}{\mu(B_{\tilde{\tau}_k}(\tilde{x}_k))} - \frac{1}{f_{\mu}(x,\rho_k)},$$
which goes to $0$ as $k$ goes to infinity since $\mu$ is UAD.

On the other hand,

\begin{align*}
(\nu_{y}(B_{\rho_k}(0)))^{-1} | \int g d(T_{0,\rho_k}[\nu_{y}])- \int g d(T_{0,\rho_k} [\mu_{\tilde{x}_k, r_{l_k}}])| & = \frac{1}{\rho_k \nu_y(B_{\rho_k}(0))}F_{R\rho_k}(\nu_{y}, \mu_{\tilde{x}_k, r_{l_k}})\\&\leq \frac{1}{\rho_k \nu_y(B_{\rho_k}(0))}F_{1}(\nu_{y}, \mu_{\tilde{x}_k, r_{l_k}}), \\ &<\frac{1}{k}
\end{align*}
 by $\ref{tau}$.
 The claim is therefore proved.

But we have $F_R (\nu_y(B_{\rho_k}(0))^{-1} T_{0,\rho_{k}} [\nu_{y}], \alpha) \to 0$ by definition of $\alpha$.
Since $$F_R(\mu_{\tilde{x}_k , \tilde{\tau}_k},\alpha) \leq F_{R}(\mu_{\tilde{x}_k , \tilde{\tau}_k} , \nu_y(B_{\rho_k}(0))^{-1} T_{0,\rho_{k}} [\nu_{y}]) + F_R (\nu_y(B_{\rho_k}(0))^{-1} T_{0,\rho_{k}} [\nu_{y}], \alpha),$$ $F_R(\mu_{\tilde{x}_k , \tilde{\tau}_k},\alpha) \to 0$. 
This proves $\eqref{alpha}$.

We have therefore obtained two sequences $\tilde{\sigma}_k=\sigma_{l_k}$ and $\tau_k=\tilde{\tau}_k$ such that:

$$\tilde{\sigma}_k<\tau_k \; , \; \mu_{x_{l_k}, \tilde{\sigma}_k} \rightharpoonup \nu^{\infty} \; , \; \mu_{x_{l_k}, \tau_k} \rightharpoonup \alpha.$$

Since $ \nu^{\infty}$ is not flat, Theorem $\ref{connectedness}$ implies that $\alpha$ is not flat which ends the proof.

\end{proof}

\begin{corollary}\label{corollaryaccum}
Let $\mu$ be a uniform measure in $\RR^{d}$ such that $\dim_{0}\mu \leq 3$. Then $\left| \mathcal{S}_{\mu} \cap K \right| < \infty$, for every $K$ compact subset of $\RR^{d}$. In particular, $dim_{H}(\mathcal{S}_{\mu})=0$. Here, $|A|$ denotes the cardinality of the set $A \subset \RR^{d}$.
\end{corollary}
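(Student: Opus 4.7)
The plan is to combine Theorem \ref{accumsing} with the rigidity of low-dimensional uniform measures to force a contradiction from any accumulation of singular points. First observe that a uniform measure $\mu$ is automatically UAD with $f_\mu(x,t)=t^n$: by Theorem \ref{dim0}, $\phi(r)/r^n \to c \in (0,\infty)$ as $r \to 0$, hence $\mu(B_{tr}(x))/\mu(B_r(x)) = \phi(tr)/\phi(r) \to t^n$ uniformly in $x \in \mathrm{supp}(\mu)$, so Theorem \ref{accumsing} is available.

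Suppose for contradiction that $\mathcal{S}_\mu \cap K$ is infinite for some compact $K$. Pick distinct $x_j \in \mathcal{S}_\mu \cap K$ with $x_j \to x_0$; set $r_j = 2|x_j - x_0| \to 0^+$ and pass to subsequences so that $y_j = (x_j - x_0)/r_j \to y$ with $|y|=1/2$ and $\mu_{x_0, r_j} \rightharpoonup \nu$ for some $\nu \in \mathrm{Tan}(\mu, x_0)$ (precompactness from Theorem \ref{doubling} and Theorem \ref{weakconv}). By Theorem \ref{Preiss} applied to the uniform measure $\mu$, $\nu$ is a conical $n$-uniform measure with $n=\dim_0\mu \leq 3$, and by Theorem \ref{accumsing} we have $y \in \mathcal{S}_\nu$ with $y \neq 0$.

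If $n \leq 2$, Preiss's rigidity (cited just above Theorem \ref{N1}) forces $\nu$ to be flat, so $\mathcal{S}_\nu=\emptyset$, a contradiction. For $n=3$, pick $\lambda \in \mathrm{Tan}(\nu, y)$; by Theorem \ref{Preiss} it is conical and $3$-uniform and, since $y$ is singular, not flat. The key step is to use conicality of $\nu$ to show $\lambda$ is translation invariant along $\RR y$. Concretely, write $y'=(1+\tau r) y$; $n$-uniformity gives $\nu(B_r(y'))=\nu(B_r(y))$ (both points lie on supp$(\nu)$, which is a cone), and the dilation symmetry $T_{0,s}[\nu] = s^n\nu$ combined with a change of variables yields the identity $\nu_{y',r} = \mathrm{trans}_{-\tau y}[\nu_{y,r}] = \nu_{y,\,r/(1+\tau r)}$. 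Letting $r \to 0$ and using uniqueness of the normalized tangent at $y$ (Theorem \ref{Preiss}), the two sides converge to $\mathrm{trans}_{-\tau y}[\lambda]$ and $\lambda$ respectively, so $\lambda$ is invariant under translation by $-\tau y$ for every $\tau$. Disintegration then gives $\lambda = \tilde\nu \times \mathcal{H}^1$ on $y^\perp \oplus \RR y$, and the $3$-uniform identity $\lambda(B_r(z))=cr^3$ forces $\tilde\nu$ to be $2$-uniform on $y^\perp$. By Preiss's rigidity $\tilde\nu$ is flat, hence $\lambda$ is flat, contradicting $y \in \mathcal{S}_\nu$.

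The main obstacle is the $n=3$ translation-invariance step. The conicality of $\nu$ supplies the right symmetry, but one must carefully match the blow-up at a varying base point $(1+\tau r)y$ with the blow-up at the fixed point $y$ at a slightly adjusted scale, while tracking the $n$-uniform normalization so that the limiting identity is an honest translation symmetry of $\lambda$ rather than a scale-dependent symmetry; the disintegration into a product then follows by standard Fubini-type arguments.
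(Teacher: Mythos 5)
Your proposal is correct in substance, but it takes a different route at the decisive step. The paper's own proof of this corollary is short: after blowing up at the accumulation point $x_\infty$ of singular points (with $r_j=|x_j-x_\infty|$, so $y\in\partial B_1(0)$), it quotes a theorem from \cite{N1} asserting that every nonzero point of the support of such a tangent measure (uniform, $\dim_0\leq 3$, hence with conical $n$-uniform blow-up, $n\leq 3$) is a flat point, and this directly contradicts Theorem \ref{accumsing}. You instead re-derive that external fact inside the paper's framework: you verify that a uniform measure is UAD (a point the paper uses implicitly, and your uniformity-in-$t$ check via Theorem \ref{dim0} is sound), you dispose of $n\leq 2$ by Preiss's flatness of $1$- and $2$-uniform measures, and for $n=3$ you use conicality of $\nu$ to show that the tangent $\lambda$ at the nonzero singular point $y$ is invariant under translations along $\RR y$, then slice to get a $2$-uniform measure on $y^{\perp}$, which is flat, forcing $\lambda$ flat and contradicting (via the uniqueness of tangents in Theorem \ref{Preiss}) the singularity of $y$. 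Your translation-invariance computation $\nu_{y',r}=\mathrm{trans}_{-\tau y}[\nu_{y,r}]=\nu_{y,r/(1+\tau r)}$ is a clean, exact identity and is essentially the content of the paper's Lemma \ref{transinvariant}, which the paper proves later in the same section for the main theorem; you could simply invoke it. The one step you gloss as ``standard Fubini-type arguments'' --- that translation invariance yields $\lambda=\mathcal{H}^1\times\tilde\nu$ with $\tilde\nu$ $2$-uniform --- is genuinely nontrivial (it is an Abel-type integral identity in the radius) and is exactly what the paper outsources to Theorems 4.5 and 3.11 of \cite{KoP}; cite those rather than treating it as routine. With that citation supplied, your argument is complete, and it has the advantage of being self-contained relative to \cite{N1}, at the cost of duplicating machinery (Lemma \ref{transinvariant} plus the \cite{KoP} slicing) that the paper develops anyway for Theorem \ref{maintheorem}.
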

\begin{proof}
Assume not. Then there exists $K$ compact subset of $\RR^{d}$ such that $\left|\mathcal{S}_{\mu} \cap K\right| = \infty$ . In particular there exists a sequence of points $\left\lbrace x_j \right\rbrace _{j} \subset \mathcal{S}_{\mu} \cap K$ converging to some  $x_{\infty} \in K$. Moreover, $x_{\infty} \in supp(\mu)$ since the support of a measure is closed set.
Let $r_j = |x_j-x_{\infty}|$ and $y_j = \frac{x_{j}-x_{\infty}}{r_j}$. Then by Theorem $\ref{Preiss}$, $\mu_{x_{\infty} , r_j } \rightharpoonup \nu$, $\nu$ normalized tangent to $\mu$ at $x_{\infty}$ and by compactness, we can assume by passing to a subsequence if necessary that $y_j \to y \in \partial B_{1}(0)$. 
By $\eqref{nuy}$, $y \in supp(\nu)$. Since $y \neq 0$, $y$ must be a flat point of $supp(\nu)$ by Theorem [??] from $\cite{N1}$. This contradicts Theorem $\ref{accumsing}$.

\end{proof}

\begin{lemma}\label{transinvariant}
Let $\nu$ be an $n$-uniform conical measure in $\mathbb{R}^d$, $\xi \neq 0$, $\xi \in spt(\nu)$, and let $\lambda$ be the tangent to $\nu$ at $\xi$. Then:
$$\lambda=\mathcal{H}^{n} \res {\mathbb{R} \times A},$$
where $A \subset \mathbb{R}^{d-1}$. Moreover, $A$ is the support of an $n-1$-uniform measure.
\end{lemma}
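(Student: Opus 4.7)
The plan is to identify $\lambda$ as a product of one-dimensional Lebesgue measure along the line $\mathbb{R}\xi$ with an $(n-1)$-uniform measure on the orthogonal hyperplane. I proceed in three stages: first invoke Preiss' theorem to conclude that $\lambda$ is a conical $n$-uniform measure; second, exploit the conicality of $\nu$ together with the fact that $\xi\neq 0$ to force $\lambda$ to be translation invariant along $\mathbb{R}\xi$; third, combine the resulting product structure with the $n$-uniformity and conicality of $\lambda$ to identify the transverse factor as $(n-1)$-uniform.

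Stage one is immediate from Theorem~\ref{Preiss}: since $\nu$ is $n$-uniform and $\xi\in\mathrm{supp}(\nu)$, the tangent $\lambda$ at $\xi$ is a conical $n$-uniform measure, so in particular $T_{0,r}[\lambda]=r^n\lambda$ for all $r>0$ and $\lambda(B_r(x))=c\,r^n$ on its support.

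The key computation is stage two. Fix $s\in\mathbb{R}$ and a test function $\phi\in C_c(\mathbb{R}^d)$. Using the substitution $y=(1+rs)x$ together with the scaling identity $\int f((1+rs)y)\,d\nu(y)=(1+rs)^{-n}\int f(y)\,d\nu(y)$ that follows from the conicality of $\nu$, one verifies
\begin{equation}
\int \phi(z-s\xi)\,d\bigl(r^{-n}T_{\xi,r}[\nu]\bigr)(z) \;=\; \int \phi(z)\,d\bigl((r')^{-n}T_{\xi,r'}[\nu]\bigr)(z), \nonumber
\end{equation}
where $r':=r/(1+rs)$. Since $r'\to 0$ as $r\to 0$ and Preiss' theorem gives $r^{-n}T_{\xi,r}[\nu]\rightharpoonup \lambda$ as $r\to 0^+$ (not merely along subsequences), the left-hand side tends to $\int\phi(z-s\xi)\,d\lambda(z)$ while the right-hand side tends to $\int\phi(z)\,d\lambda(z)$. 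Therefore $\lambda$ is invariant under translation by $s\xi$ for every $s\in\mathbb{R}$.

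For stage three, I rotate coordinates so that $\xi$ is a positive multiple of $e_1$. Translation invariance along $e_1$ combined with $\sigma$-finiteness yields, via Fubini, a product decomposition $\lambda=\mathcal{L}^1\otimes\sigma$ for some Radon measure $\sigma$ on $\mathbb{R}^{d-1}$; set $A:=\mathrm{supp}(\sigma)$, so $\mathrm{supp}(\lambda)=\mathbb{R}\times A$. Expressing the $n$-uniformity of $\lambda$ through Fubini gives
\begin{equation}
\lambda(B_r((0,a)))=\int_{-r}^{r}\sigma\bigl(B_{\sqrt{r^2-u^2}}(a)\bigr)\,du = c\,r^n, \qquad a\in A, \nonumber
\end{equation}
and since the right-hand side depends only on $r$, the distribution function $\rho\mapsto \sigma(B_\rho(a))$ is independent of $a\in A$, so $\sigma$ is uniformly distributed. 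Applying $T_{0,r}[\lambda]=r^n\lambda$ to product sets gives $T_{0,r}[\sigma]=r^{n-1}\sigma$; since $A$ is a nonempty closed subset of $\mathbb{R}^{d-1}$ invariant under positive dilations, it contains $0$, hence $\sigma(B_r(0))=r^{n-1}\sigma(B_1(0))=c'r^{n-1}$. Uniform distribution then propagates this to $\sigma(B_r(a))=c'r^{n-1}$ for every $a\in A$, so $\sigma$ is $(n-1)$-uniform. Identifying $\lambda=\mathcal{L}^1\otimes\sigma$ with $\mathcal{H}^n\res(\mathbb{R}\times A)$ up to a normalizing constant (absorbed into the implicit scalar multiple of the tangent) completes the argument.

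The hard part will be stage two: pinning down the correct substitution $r'=r/(1+rs)$ and using Preiss' uniqueness of the tangent so that both limits are identified with the same $\lambda$. Once that invariance is in hand, stages one and three are standard product-measure and dimension bookkeeping.
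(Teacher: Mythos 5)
Your argument is correct, and it reaches the conclusion by a route that differs from the paper's in both halves. For the translation invariance of $\lambda$ along $\RR\xi$, the paper compares the blow-ups of $\nu$ at the two nearby centers $(1+t)\xi$ and $(1+(1-s_j)t)\xi$ (using conicality to identify the tangent at $(1+t)\xi$ with $\lambda$) and controls the discrepancy by a Lipschitz estimate of size $s_j|t||\xi|$ before letting $s_j\to 0$; your substitution $y\mapsto(1+rs)y$ turns this into the exact identity relating the blow-up at $\xi$ translated by $s\xi$ to the blow-up at $\xi$ at scale $r'=r/(1+rs)$, which is cleaner, treats both signs of $s$ at once, and rests on the same two inputs the paper uses (conicality of $\nu$ and the full, subsequence-free convergence of $\nu_{\xi,r}$ from Theorem~\ref{Preiss}). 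For the second half, the paper's proof actually stops at the set-level splitting $\mathrm{supp}(\lambda)=\RR e_1\oplus A$ and defers the measure-level statements (that $\lambda$ is a constant multiple of $\mathcal{H}^n\res(\RR\times A)$ and that $A$ carries an $(n-1)$-uniform measure) to the citations of \cite{KoP} in the proof of the main theorem; you instead derive the product decomposition $\lambda=\mathcal{L}^1\otimes\sigma$ from translation invariance, read off $T_{0,r}[\sigma]=r^{n-1}\sigma$ from conicality, and prove directly that $\sigma$ is $(n-1)$-uniform, so your write-up covers the ``moreover'' clause that the paper's own proof leaves implicit.

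Two steps deserve an explicit line of justification. First, the inference that $\rho\mapsto\sigma(B_\rho(a))$ is independent of $a\in A$ because $\int_{-r}^{r}\sigma(B_{\sqrt{r^2-u^2}}(a))\,du=cr^n$ for all $r$ is an Abel-type uniqueness statement: after the substitution $\rho=\sqrt{r^2-u^2}$ it says the fractional integral of order $1/2$ of $v\mapsto\sigma(B_{\sqrt v}(a))$ is independent of $a$, and injectivity of that operator on nondecreasing, one-sided continuous functions (e.g.\ by applying the operator once more and differentiating) gives the claim; it is true but not literally immediate. Second, the final identification of $\mathcal{L}^1\otimes\sigma$ with a constant multiple of $\mathcal{H}^n\res(\RR\times A)$ still requires the representation of uniformly distributed measures as constant multiples of Hausdorff measure on their supports, i.e.\ the same external input from \cite{KiP} and \cite{KoP} that the paper itself invokes, so you should cite it rather than absorb it silently into the normalization.
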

\begin{proof}
We will first prove that \begin{equation} \label{transinvmeasure}
T_{t \xi, 1} [\lambda]=\lambda 
\end{equation}
 for any $t>0$. 

Take $t>0$.
Then, on one hand
 \begin{align}\label{nualpha}
\nu_{(1+t)\xi, s_j} &=s_{j}^{-m} T_{\xi, \frac{s_j}{1+t}}[T_{0, 1+t}[\nu]],   \nonumber   \\
&= s_{j}^{-m} (1+t)^{m} T_{\xi,\frac{s_j}{1+t}}[\nu]  , \mbox{ since } \nu \mbox{ is conical} \nonumber \\
& \rightharpoonup \lambda,
\end{align}
since the sequence $\frac{s_j}{1+t} \to 0$ and $s_{j}^{-m} (1+t)^{m} T_{\xi,\frac{s_j}{1+t}}[\nu](B_{1}(0))=\lambda(B_{1}(0))=\omega_{m}$.

On the other hand, we have
\begin{align*}
T_{(1+t)\xi, s_j}(z)& = \frac{z-(1+t)\xi}{s_j}, \\
&=\frac{z- (1+(1-s_j)t)\xi}{s_j} -t \xi, \\
&= T_{t \xi, 1} \circ T_{(1+(1-s_j)t)\xi, s_j} (z).
\end{align*}
We now prove that 
\begin{equation}\label{transinvstep}
{s_j}^{-m} T_{(1+(1-s_j)t) \xi , s_j}[\nu] \to \lambda.
\end{equation}

Let $\phi \in \mathcal{L}(R)$.
Then, for $j$ large enough so that $|1-s_j| \leq 2$ we have:
\begin{align*}
&{s_j}^{-m} \left| \int \phi(z) dT_{(1+(1-s_j) t)\xi, s_j}[\nu](z) - \int \phi(z) dT_{(1+t)\xi, s_j}[\nu](z) \right| \\& \leq 
{s_j}^{-m}  \left| \int \left( \phi(z-(1+(1-s_j) t)\xi)  - \phi(z-(1+t)\xi) \right) dT_{0, s_j}[\nu](z) \right|, \\
& \leq {s_j}^{-m} \int_{B_{R+(1+2|t|)|\xi|}(0)} |s_j||\xi| |t| dT_{0,s_j}[\nu](z) , \\
& \leq |s_j||\xi| |t|\omega_{m} (R+(1+2|t|)|\xi|)^{m}.
\end{align*}
Taking the supremum over all $\phi \in \mathcal{L}(R)$, we get: 
\begin{align}\label{transinvstep'}
A_{j} &:= F_{R}({s_j}^{-m} T_{(1+(1-s_j)t) \xi , s_j}[\nu], {s_j}^{-m} T_{(1+t) \xi , s_j}[\nu]), \nonumber \\ & \leq |s_j||\xi| |t|\omega_{m} (R+(1+2|t|)|\xi|)^{m},
\end{align}
which goes to $0$ as $j \to \infty$ since $s_{j} \to 0$.
We have 
\begin{equation}
\label{triineq} F_{R}({s_j}^{-m} T_{(1+(1-s_j)t) \xi , s_j}[\nu], \lambda) \leq A_{j} + F_{R}({s_j}^{-m} T_{(1+t) \xi , s_j}[\nu], \lambda).
\end{equation}
Since $A_j \to 0$ by $\eqref{transinvstep'}$ and, according to $\eqref{nualpha}$, $F_{R}({s_j}^{-m} T_{(1+t) \xi , s_j}[\nu], \lambda) \to 0$, by using $\eqref{triineq}$, we prove $\eqref{transinvstep}$.

This proves $\eqref{transinvmeasure}$ from which it follows that
\begin{equation} \label{transinv}
\Sigma - t \xi = \Sigma \mbox{ for } t>0.
\end{equation}
Indeed, for $t>0$,
\begin{align*}
z \in \Sigma & \iff \mbox{For all } r>0, \lambda(B_{r}(z))>0, \\
&\iff \mbox{ For all } r>0, T_{t\xi, 1} [\lambda](B_{r}(z)) >0, \\ & \iff \mbox{ For all } r>0, \lambda(B_{r}(z+t\xi))>0, \\ &\iff z \in \Sigma - t \xi .
\end{align*}
Adding $t\xi$ on both sides of $\ref{transinv}$, we see that
\begin{equation}\label{transinvfinal}
\Sigma - t \xi = \Sigma \mbox{ for } t \in \RR.
\end{equation}

Let $e_1 = \frac{\xi}{|\xi|}$ and $A= \left\lbrace x \in \Sigma ; x.{e_1}=0 \right\rbrace $.  
We claim that 
\begin{equation}\label{directproduct} \Sigma= \RR e_1 \oplus A.  
\end{equation}
On one hand, if $z \in \RR e_1 \oplus A$, then there exists $z' \in A$ and $t \in \RR$ such that:
$$z= z' + te_1.$$
Since $A\subset \Sigma$ by definition, this implies that $z \in \Sigma+ te_1$ and consequently, $z \in \Sigma$ by $\eqref{transinv}$.
On the other hand, if $z \in \Sigma$, we can write:
$$ z= (z - \left\langle z,e_1 \right\rangle e_1)+\left\langle z,e_1 \right\rangle e_1.$$
Let $t_1 =  \left\langle z,e_1 \right\rangle$. By $\eqref{transinv}$, $z-t_1e_1 \in \Sigma$. Moreover, $\left\langle z-t_1e_1 , e_1 \right\rangle =0$. Therefore, $z-t_1e_1\in A$ and $z \in \RR e_1 + A$.
The uniqueness of such a decomposition follows from the fact that $\RR e_1$ and $A$ are orthogonal by construction.
This proves $\eqref{directproduct}$.
\end{proof}

We restate Theorem $\ref{maintheorem}$ before proving it.
\begin{theorem}
 Let $\mu$ be a $n$-UAD measure in $\RR^{d}$ , $ 3 \leq n \leq d$ where $n$ is the dimension of $\mu$. Then 
 \begin{equation} \label{hdimsing}
 dim_{\mathcal{H}}(\mathcal{S}_{\mu}) \leq n-3,
\end{equation}
where $dim_{\mathcal{H}}$ denotes the Hausdorff dimension.
\end{theorem}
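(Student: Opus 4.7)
I would argue by contradiction. Suppose $\dim_{\mathcal{H}}(\mathcal{S}_\mu) > n-3$, and fix $s$ with $n-3 < s < \dim_{\mathcal{H}}(\mathcal{S}_\mu)$, so that the $s$-dimensional Hausdorff content satisfies $\mathcal{H}^s_\infty(\mathcal{S}_\mu) > 0$. A standard Frostman-type density-point lemma for Hausdorff content then produces a point $x_0\in\mathcal{S}_\mu$ and a constant $c>0$ with
\[
\limsup_{r\to 0^+}\frac{\mathcal{H}^s_\infty(\mathcal{S}_\mu\cap B_r(x_0))}{r^s}\geq c.
\]
Choose $r_j\downarrow 0$ realizing this $\limsup$. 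Using doubling of $\mu$ together with Theorem \ref{weakconv}, pass to a subsequence so $\mu_{x_0,r_j}\rightharpoonup\nu$. By Theorem \ref{pseudotangentUAD}, $\nu$ is uniform; by Theorem \ref{Preiss} it is conical $k$-uniform with $k:=\dim_0 f_\mu(x_0,\cdot)\leq n$.

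Next I would transfer the quantitative size of $\mathcal{S}_\mu$ near $x_0$ to the blow-up. Set $E_j:=r_j^{-1}(\mathcal{S}_\mu-x_0)\cap\overline{B}_1(0)$; by the scaling behavior of $\mathcal{H}^s_\infty$ we have $\mathcal{H}^s_\infty(E_j)\geq c$ for all large $j$. After a further subsequence (Blaschke selection), $E_j\to E$ in Hausdorff metric for some compact $E\subset\overline{B}_1(0)$. Theorem \ref{accumsing} says precisely that every such Hausdorff limit lies in $\mathcal{S}_\nu$, so $E\subset\mathcal{S}_\nu$. The analytic key is the upper-semicontinuity of $s$-Hausdorff content under Hausdorff convergence: given any finite cover of $E$ by balls realizing $\mathcal{H}^s_\infty(E)$ to within $\varepsilon$, its open thickening contains $E_j$ for all large $j$, and therefore $\limsup_j\mathcal{H}^s_\infty(E_j)\leq\mathcal{H}^s_\infty(E)$. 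Combining with the lower bound $\mathcal{H}^s_\infty(E_j)\geq c$, we obtain $\mathcal{H}^s_\infty(\mathcal{S}_\nu\cap\overline{B}_1(0))\geq c>0$, whence $\dim_{\mathcal{H}}(\mathcal{S}_\nu)\geq s$.

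Finally I would obtain the contradiction from the known case of uniform measures. If $k\leq 2$, then by Preiss's classification $\nu$ must be flat, so $\mathcal{S}_\nu=\emptyset$, directly contradicting the positive lower bound. If $k\geq 3$, Theorem \ref{N1} applied to $\nu$ (in the uniform-measure version indicated after its statement, with $\dim_0\nu=k$) gives $\dim_{\mathcal{H}}(\mathcal{S}_\nu)\leq k-3\leq n-3 < s$, again contradicting the lower bound. Either way we reach a contradiction and conclude $\dim_{\mathcal{H}}(\mathcal{S}_\mu)\leq n-3$.

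The main obstacle is the quantitative propagation of singularity in the middle step: Theorem \ref{accumsing} only provides a set-theoretic inclusion for Hausdorff limits of singular points, and one has to pick the right notion of mass to turn this into a dimension lower bound. Using ordinary $\mathcal{H}^s$ would fail because it is not upper-semicontinuous in the Hausdorff metric, but the content $\mathcal{H}^s_\infty$ is, and it detects Hausdorff dimension. As an alternative compatible with the structural lemmas developed here, one could instead run a Federer-style iterative dimension reduction: at a nonzero singular point of $\nu$, Lemma \ref{transinvariant} yields a further blow-up of the form $\mathcal{H}^n\res(\mathbb{R}\times A)$ with $A$ supporting a $(k-1)$-uniform measure in $\mathbb{R}^{d-1}$, thereby reducing $(k,s)$ to $(k-1,s-1)$; iterating lands at the base case $k=3$ handled by Corollary \ref{corollaryaccum}, producing the same contradiction without invoking Theorem \ref{N1}.
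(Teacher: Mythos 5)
Your argument is sound and its engine is the same as the paper's: locate $x_0\in\mathcal{S}_{\mu}$ with positive upper $\mathcal{H}^{s}_{\infty}$-density, blow up there, use Theorem \ref{accumsing} to force the rescaled singular sets into $\mathcal{S}_{\nu}$, and exploit the upper semicontinuity of Hausdorff \emph{content} (not measure) to conclude $\mathcal{H}^{s}_{\infty}(\mathcal{S}_{\nu}\cap \overline{B}_1(0))>0$; the paper carries out exactly this transfer, phrased with finite open covers and $\epsilon$-neighborhoods of $\mathcal{S}_{\nu}$ rather than with a Blaschke limit, but the content-comparison is identical. Where you diverge is the endgame: you quote Theorem \ref{N1} in the uniform-measure version (with $\dim_0\nu=k$), which the paper asserts in the introduction is valid, and you are done after a single blow-up; the paper instead re-runs the Federer-style dimension reduction from scratch --- two further blow-ups inside the uniform tangent to reach a conical measure that is translation invariant along a line (Lemma \ref{transinvariant}), the product bound $\dim_{\mathcal{H}}(\mathcal{S}_{\lambda})\leq \dim_{\mathcal{H}}(\mathcal{S}_{\lambda_0})+1$, induction on the dimension, and Corollary \ref{corollaryaccum} as the base case $k\leq 3$. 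Your route is shorter but leans on the external (or asserted) uniform version of Theorem \ref{N1}; the paper's route is self-contained modulo the structural lemmas it proves, which is presumably why it repeats the reduction ``for the reader's convenience.'' Your closing alternative is, in fact, precisely the paper's proof.

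Two small inaccuracies, neither fatal: Theorem \ref{Preiss} does not make $\nu$ itself conical $k$-uniform --- $\nu$ is merely uniformly distributed with $\dim_0\nu=k$, and it is its tangents at points of $\mathrm{supp}(\nu)$ that are conical $k$-uniform; since you only use the uniform version of Theorem \ref{N1}, this does not affect the argument. Likewise, for $k\leq 2$ the measure $\nu$ need not be flat (the surface measure on a $2$-sphere is uniformly distributed with $\dim_0=2$), but all of its tangent measures are flat by Preiss, so $\mathcal{S}_{\nu}=\emptyset$ still holds (or simply invoke Corollary \ref{corollaryaccum}, which covers $\dim_0\nu\leq 3$).
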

\begin{proof}
The proof of this theorem is similar to the proof of Theorem [] in $\ref{N1}$. We repeat it for the reader's convenience.

The theorem holds for $n=3$. Indeed, suppose $\mu$ is $3$-UAD and $s$ is such that $\mathcal{H}^{s}(\mathcal{S}_{\mu})>0$. Then by an argument which will be outlined in the next few paragraphs, there exists $x_0 \in \mathcal{S}_{\mu}$ and $\nu \in Tan(\mu,x_0)$ such that $\mathcal{H}^{s}(\mathcal{S}_{\nu} \cap \overline{B}_1(0))>0$. But $\nu$ uniform and $\dim_{0}\nu \leq 3$ implies that $s=0$ by Corollary $\ref{corollaryaccum}$.

Let $m< d$ and assume the theorem holds for all $l$-uniform measures in $\RR^{d}$ such that $l < m$. We want to prove that it holds for $m$-UAD measures.

Suppose that $s \in \RR_{+}$ is such that $\mathcal{H}^{s}(\mathcal{S}_{\mu}) >0$.

We first find  a singular point $x_0$ of the support of $\mu$ such that the following holds: if $\nu$ is a tangent to $\mu$ at $x_0$, then:
$$\mathcal{H}^{s}(\mathcal{S}_{\nu} \cap \overline{B_1(0)})>0.$$

By Lemma 4.6 in $\cite{M}$, $$\mathcal{H}^{s}(\mathcal{S}_{\mu}) >0 \iff \mathcal{H}^{s}_{\infty}(\mathcal{S}_{\mu}) >0.$$

Since $\mathcal{H}_{\infty}^{s}(\mathcal{S}_{\mu} )>0$, there exists a compact set $K$ such that $\mathcal{H}_{\infty}^{s}(\mathcal{S}_{\mu} \cap K )>0$. Let $\tilde{\mathcal{S}_{\mu}}= \mathcal{S_{\mu}} \cap K$.
We have
\begin{equation}\label{densityupper'}
\theta^{s,*}(\mathcal{H}^{s}_{\infty}\res \tilde{{\mathcal{S}_{\mu}} }, z) \geq 2^{-s},
\end{equation}
for $\mathcal{H}^{s}$-almost every $z \in  \tilde{\mathcal{S}_{\mu}}$.
This follows from Theorem 3.26 (2), in $\cite{S}$ since $\tilde{\mathcal{S}_{\mu}}$ is a compact subset of $\RR^d$.
In particular, there exists $x_0 \in \tilde{\mathcal{S}_{\mu}}$ such that:
\begin{equation}
\theta^{s,*}(\mathcal{H}^{s}_{\infty}\res \tilde{{\mathcal{S}_{\mu}} }, x_0) \geq 2^{-s},
\end{equation}

Consequently, there exists a sequence of  radii $\left\lbrace r_{j} \right\rbrace _{j}$ decreasing to $0$ such that:
$$\mathcal{H}_{\infty}^{s} \left(\overline{B_1(0)} \cap \frac{\tilde{{\mathcal{S}}_{\mu}} - x_0}{r_j}\right) \geq 2^{-s}. $$
Since $r_j \downarrow 0$, $\mu_{ x_0 , r_j } \rightharpoonup \nu$ where $\nu$is a tangent to $\mu$ at $x_0$.
By Theorem $\ref{accumsing}$, for all $\epsilon >0$, there exists $j_{0}$ such that: 
\begin{equation} \label{nhoodsing}
\frac{\tilde{\mathcal{S}_{\mu}} - x_{0}}{r_j} \cap \overline{B_1(0)} \subset \left( \mathcal{S}_{\nu} \right)_{\epsilon}  \mbox{ whenever }         j \geq j_{0}.
\end{equation}
Pick $\delta > 0$ and let $\left\lbrace E_k \right\rbrace _{k}$ be a covering of $\tilde{\mathcal{S}_{\nu}} = \mathcal{S}_{\nu} \cap \overline{B_1(0)}$ such that:
$$\mathcal{H}^{s}_{\infty}(\tilde{\mathcal{S}_{\nu}} ) >  \omega_{s} 2^{-s} \sum_{k=1}^{\infty} (diam(E_k))^{s}  - \delta.$$
We can assume that the sets $E_k$ are open.
Since $\bigcup E_{k}$ is open, $\tilde{\mathcal{S}_{\nu}}$ is compact and $\tilde{\mathcal{S}_{\nu}} \subset \bigcup E_{k}$, we can cover $\tilde{\mathcal{S}_{\nu}}$ with finitely many $E_k$, $k=1,\ldots,K$. Letting $E$ be the union of this finite cover and $\epsilon$ be a number smaller than the minimum of the diameters of the  $E_k$'s in this finite cover, we have:
$$(\mathcal{S}_{\nu} )_{\epsilon} \subset E.$$
It follows from $\eqref{nhoodsing}$ that for $j$ large enough, we have
$$\mathcal{S}_{j} \subset E,$$
where $\mathcal{S}_{j} = \frac{\tilde{\mathcal{S}_{\mu}} - x_0}{r_j} \cap \overline{B_1(0)}$.
Hence, for $j$ large, since $\left\lbrace E_k \right\rbrace_{k=1}^{K}$ covers $\mathcal{S}_{j}$
\begin{align*}
\mathcal{H}^{s}_{\infty}(\mathcal{S}_{j} )  & \leq \omega_s 2^{-s} \sum_{k=1}^{K} (diam(E_k))^s, \\ &\leq  \mathcal{H}^{s}_{\infty}(\tilde{\mathcal{S}_{\nu}})+\delta.
\end{align*}
Since $\delta$ was chosen arbitrarily, we get $
\mathcal{H}^{s}_{\infty}(\mathcal{S}_{j} ) \leq \mathcal{H}_{\infty}^{s}(\tilde{\mathcal{S}_{\nu}})$.
Letting $j \to \infty$, we get:
$$2^{-s} \leq \limsup \mathcal{H}^{s}_{\infty}(\mathcal{S}_{j} ) \leq  \mathcal{H}_{\infty}^{s}(\tilde{\mathcal{S}_{\nu}}).$$
This gives $\mathcal{H}_{\infty}^{s}(\mathcal{S}_{\nu}) \geq  \mathcal{H}_{\infty}^{s}(\tilde{\mathcal{S}_{\nu}}) >0$. The claim is thus proved. 

Since $\mathcal{H}^{s}(\mathcal{S}_{\nu} \cap \overline{B_1(0)})>0$, by the same reasoning as for $\mu$, there exists $\tilde{\xi}$, $\tilde{\xi} \in \mathcal{S}_{\nu} \cap \overline{B_1(0)}$ such that : $$\theta^{s,*}(\mathcal{H}_{\infty}^{s} \res \mathcal{S}_{\nu}, \tilde{\xi}) \geq 2^{-s}.$$ In particular, there exists a decreasing sequence $\left\lbrace s_j \right\rbrace$ such that $\mathcal{H}^{s}_{\infty} (\mathcal{S}_{\nu} \cap \overline{B_{s_j}(\tilde{\xi}}) \geq 2^{-s} s_{j}^{s}$ and $\nu_{\tilde{\xi}, s_j} \rightharpoonup \tilde{\nu}$, where $ \tilde{\nu}$ is the normalized tangent measure to $\nu$ at $\tilde{\xi}$. 
The same procedure as above gives:
\begin{equation}\label{dimred1}
\mathcal{H}^{s}(\mathcal{S}_{ \tilde{\nu}} \cap \overline{B_1(0)})>0.
\end{equation}
Note that $\tilde{nu}$ is a conical $k$-uniform measure for some $k \leq m$ by Theorem $\ref{Preiss}$.

We repeat the procedure one final time to get a measure which is translation invariant along its one dimensional spine. Since $\mathcal{H}^{s}(\mathcal{S}_{ \tilde{\nu}} \cap \overline{B_1(0)})>0$, by the same reasoning as above we can find $\xi \in \mathcal{S}_{ \tilde{\nu}} \cap \overline{B_1(0)}$, $\xi \neq 0 $ and call $\lambda $ the normalized tangent measure to $\tilde{\nu}$ at $\xi$. We also get :
$$\mathcal{H}^{s}(\mathcal{S}_{ \lambda} \cap \overline{B_1(0)}).$$

Let $\Sigma= supp(\lambda)$. We have by Theorem $\ref{transinvariant}$ that 
$$ \Sigma = \RR \times A $$ for some $A \subset \RR^{d-1}$ such that $\mathcal{H}^{k-1} \res A$ is $(k-1)$-uniform.

So there exists $c>0$ so that $\lambda = c {\omega_{k}}^{-1} \mathcal{H}^{k} \res \left( \RR \times A \right)$ by Theorem [4.5] in $\cite{KoP}$.
By Theorem 3.11 in $\cite{KoP}$, $\lambda_{0}= \mathcal{H}^{k-1} \res A$ is $(m-1)$-uniform.

It easily follows that \begin{equation}\label{almostproduct}\mathcal{S}_{\lambda} \subset \RR \times \mathcal{S}_{\lambda_{0}}. 
\end{equation}
 from which we deduce that 
\begin{equation} \label{hausdorffdimproduct}
dim_{H}(\mathcal{S}_{\lambda}) \leq dim_{H}(S_{\lambda_0}) + 1.
\end{equation} 

But since $\mathcal{H}^{s}(\mathcal{S}_{\lambda})>0$, 
\begin{equation} \label{hausdorffdimproduct2}
dim_{H}(\mathcal{S}_{\lambda}) \geq s. 
\end{equation} 
Combining $\eqref{hausdorffdimproduct}$ and $\eqref{hausdorffdimproduct2}$, we get:
\begin{equation}
s-1 \leq dim_{H}(\mathcal{S}_{\lambda_0}).
\end{equation}
On the other hand, $S_{\lambda_0}$ being the singular set of a $(k-1)$-uniform measure, the induction hypothesis implies that $dim_{H}(\mathcal{S}_{\lambda_0}) \leq k-4 \leq m-4$. Therefore $s \leq m-3$.

\end{proof}
 \newpage


\begin{thebibliography}{99}
\bibitem[B]{B} Badger M. {\em Harmonic polynomials and  tangent measures of harmonic measure}, Rev. Mat. Iberoamericana 27 (2011), no. 3, 841-870
\bibitem[BL]{BL} Badger M.; Lewis S. {\em Local set approximation: Mattila-Vuorinen type sets, Reifenberg type sets, and tangent sets}, preprint (2014)
   \bibitem[Del]{Del}  De Lellis C. {\em Rectifiable Sets, Densities, and Tangent Measures}, Zurich Lectures in Advanced Mathematics. European Mathematical Society, Zurich, 2008
     \bibitem[DKT] {DKT} David, G.; Kenig, C.E.; Toro, T. {\em Asymptotically Optimally Doubling Measures and Reifenberg Flat Sets with Vanishing Constant}, Comm. on Pure App. Math. 54, (2001), 385-449
 \bibitem[DS]{DS} G. David and S. Semmes {\em Analysis of and on Uniformly Rectifiable sets} Mathematical Surveys and Monographs, 38. American Mathematical Society, Providence, RI, (1993) 
\bibitem[KiP]{KiP} Kirchheim B.  and  Preiss D. {\em Uniformly Distributed Measures in Euclidean Spaces}, Math. Scand. 90 (2002), 152-160.
  \bibitem[KoP]{KoP}  Kowalski O. and Preiss D. {\em Besicovitch-type properties of measures and submanifolds}, J. Reine Angew. Math. 379 (1987), 115-151.
  \bibitem[KPT]{KPT} Kenig, C.; Preiss, D. ; Toro, T. {\em Boundary structure and size in terms of interior and exterior harmonic measures}, Journal of the American Mathematical Society 22(2009), 771-796.
 \bibitem[KT]{KT} C.E. Kenig and T. Toro {\em Free boundary regularity for harmonic measures and Poisson kernels}, Ann. of Math.(2) 150 (1999), 369-454.

\bibitem[L]{L} Lewis, S. {\em Singular points of Holder asymptotically optimally doubling measures}, pre-print
  \bibitem[M]{M} Mattila, P. {\em Geometry of Sets and Measures in Euclidean Spaces}, Cambridge Stud. in Adv. Math. 44, Cambridge Univ. Press, Cambridge, 1995.
  
  \bibitem[N1]{N1} Nimer, A.D. {\em A Sharp Bound on the Hausdorff Dimension of the Singular Set of a Uniform Measure},  Calc.  Var. Partial Differential Equations 56(4) (2017), 111
  
   \bibitem[N2]{N2} Nimer, A.D. {\em Conical $3$-uniform measure: new examples and characterizations }, submitted pre-print, https://arxiv.org/abs/1608.02604
\bibitem[P]{P} Preiss, D. {\em Measures in $R^n$: distribution, rectifiability, and densities}, Ann. of Math. 125 (1987), 537-643
  \bibitem[PTT]{PTT} Preiss, D.; Tolsa, X.; Toro, T. {\em On the smoothness of H\"older doubling measures}, Calc.  Var. Partial Differential Equations 35(3) (2009), 339-363 
\bibitem[S]{S} Simon, L.  {\em Lectures on Geometric Measure Theory}, Center for Mathematical Analysis, Australian National University, v.3 (1984)
\bibitem[T]{T} Tolsa, X. {\em Uniform Measures and Uniform Rectifiability}, J. Lond. Math. Soc. (2) 92 (2015) 1-18

 

\end{thebibliography}
\end{document}